\theoremstyle{plain}
\newtheorem{theorem}{Theorem}
\newtheorem{lemma}{Lemma}
\newtheorem{proposition}{Proposition}
\newtheorem{corollary}[theorem]{Corollary}
\theoremstyle{definition}
\newtheorem{claim}{Claim}
\newtheorem{remark}{Remark}
\def\R{\mathbb{R}}
\def\N{\mathbb{N}}
\def\X{\mathcal{X}}
\def\Td{\mathbb{T}^d}
\def\div{\operatorname{div}}
\def\tox{\xrightarrow{\X}}
\def\re{\mathring{R}}
\title{The Baire category method for intermittent convex integration}
\author{Gabriel Sattig}
\address{Mathematisches Institut, Universit\"at Leipzig, D-04109 Leipzig, Germany}
\email{gabriel.sattig@math.uni-leipzig.de}
\author{L\'aszl\'o Sz\'ekelyhidi}
\address{Mathematisches Institut, Universit\"at Leipzig, D-04109 Leipzig, Germany}
\email{laszlo.szekelyhidi@math.uni-leipzig.de}
\begin{document}
	
	\begin{abstract}
		We use a convex integration construction from \cite{ModenaSattig2020} in a Baire category argument to show that weak solutions to the transport equation with incompressible vector fields with Sobolev regularity are generic in the Baire category sense. Using the construction of \cite{BurczakModenaSzekelyhidi20} we prove an analog statement for the 3D Navier-Stokes equations.
	\end{abstract}
	
	\maketitle
	
	\section{Introduction}
	\label{sec:intro}
	
	In the past 15 years convex integration has been established as a very versatile and flexible method for the construction of weak solutions to various equations in fluid mechanics. In this context the first results appeared for the construction of bounded weak solutions of the incompressible Euler equations \cite{DeLellisSzekelyhidi09,DeLellisSzekelyhidi10}, with a method of construction directly related to the construction of Lipschitz solutions to first order partial differential inclusions \cite{dacorogna_marcellini99,Kirchheim:2002wc,muller_sverak03}. In particular, following the ideas developed by Cellina \cite{cellina2005}, Dacorogna-Marcellini \cite{dacorogna_marcellini97} and Kirchheim \cite{kirchheim03}, in this setting one can reduce the existence proof to a simple perturbation statement. Subsequently, similar ideas were applied to the compressible Euler system as well as certain models involving an active scalar. For a comprehensive survey of related results we refer to \cite{MR2917063}. 
	
	At the same time, motivated by Onsager's conjecture, a different framework was developed in \cite{DeLellisSzekelyhidi13} for the construction of continuous and H\"older continuous weak solutions of the incompressible Euler system. In this method of construction, which is closely related to the construction of $C^1$ isometric embeddings by Nash and Kuiper \cite{nash54} (see the survey \cite{delellis_szekelyhidi16} for connections between the Nash-Kuiper theorem and fluid mechanics), it seems necessary to explicitly construct an approximating sequence with the required estimates and convergence properties, a simple perturbation step seems insufficient. In particular, the final weak solution does not lie in a natural, easily identifiable function space, but rather the optimal regularity properties heavily depend on the particular equation and the delicate estimates involved in the construction. In the case of the Euler equations the optimal H\"older exponent is dictated by Onsager's conjecture and the conservation of energy, and this was reached in subsequent work of Isett \cite{Ise19,BDLSV19}). In other examples, for instance incompressible Euler equations with the local energy inequality, compressible Euler system, the SQG system, or the Nash-Kuiper isometric embedding problem, current constructions are not strong enough to reach conjectured optimal exponents. 
	
	In a further groundbreaking development, Buckmaster and Vicol modified the constructive scheme used for Euler in order to construct weak solutions to the Navier-Stokes equations \cite{BuckmasterVicol19}. Their technique resembles the construction of Euler flows with high Hölder regularity, but uses as building blocks functions which are very much concentrated in space, which allows to control the dissipative term. This high concentration is referred to as "intermittency", in distant analogy with the phenomenon and mathematical theory of intermittency in turbulence. Subsequently, similar constructions were used to construct solutions for the transport equation with Sobolev vector fields \cite{ModenaSzekelyhidi18,ModenaSzekelyhidi19,ModenaSattig2020,BrueColomboDeLellis21,PitchoSorella21,BuckModena22} and Navier-Stokes equations for non-Newtonian fluids \cite{BurczakModenaSzekelyhidi20}.
	
	In this short note we wish to point out that these latter, "intermittent" constructions are actually closer in spirit to the $L^\infty$-framework in the sense that a simple perturbation step combined with a standard Baire category argument suffices for showing existence of weak solutions - an explicit construction of an approximating sequence is not necessary and the there is a natural space in which the limit objects of such constructions lie. 
	
	More precisely, we will consider the construction of weak solutions of the linear transport equation as well as weak solutions of the incompressible Navier-Stokes equations. We wish to emphasize that the existence statements proved here are not new. It is merely that our method of proof is simpler, relying on a perturbation argument rather than an explicit construction of an approximating sequence. 
	
	\subsection{The transport equation}
    We prove genericity of weak solutions to the transport equation
	\begin{equation}	\label{eq:transport}
	\begin{split}
	\partial_t \rho + v\cdot\nabla\rho &=0 \\
	\div v &=0
	\end{split}
	\end{equation}
	with divergence free velocity field $v$ in some Sobolev space and density in a Lebesgue space.
	Non-uniqueness of such solutions in the very same setting (which is motivated by its similarity with the setting in the seminal work of DiPerna and Lions \cite{DiPernaLions89}) has been shown in a line of research \cite{ModenaSzekelyhidi18,ModenaSzekelyhidi19,ModenaSattig2020} by the authors and S.~Modena by the constructive method of intermittent convex integration. The aim of this note is to give a new proof which is not constructive but instead uses a Baire category argument.
	While this implies genericity of such solutions which was not known, our main motivation is to show that the implicit Baire category method can be combined with intermittent convex integration, even if high Sobolev regularity of solutions is desired.
	
	We will prove the following
	\begin{theorem}
		\label{thm:transport}
		If $p\in(1,\infty)$ and $\tilde{p}\ge 1$ such that
		\begin{equation}\label{eq:r-condition}
		\frac{1}{p} + \frac{1}{\tilde{p}} > 1+\frac{1}{d}
		\end{equation}
		then the set of solutions to \eqref{eq:transport} in the class
		\begin{equation*} 
		\rho\in C\left( [0,T],L^p_x(\Td) \right) , \ u\in C\left( [0,T],L^{p'}_x(\Td) \cap W^{1,\tilde{p}}_x(\Td) \right)
		\end{equation*}
		is generic (in the Baire sense).
\end{theorem}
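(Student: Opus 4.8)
The plan is to set up the problem in the abstract framework of Baire category for differential inclusions, following the classical strategy of Cellina, Dacorogna–Marcellini and Kirchheim as adapted to fluid equations. First I would identify the correct complete metric space $X$: triples $(\rho, u, \re)$ where $\rho \in C([0,T],L^p_x)$, $u \in C([0,T],L^{p'}_x \cap W^{1,\tilde p}_x)$, and $\re$ is an auxiliary "defect" field (a matrix or vector field measuring the failure of the continuity equation in divergence form), subject to uniform bounds dictated by \eqref{eq:r-condition}, and such that $\partial_t\rho + \div(\rho u) = \div \re$ (together with $\div u = 0$) holds in the sense of distributions. The topology on $X$ should be chosen so that the $\rho$- and $u$-components carry a \emph{weak} (weak-$*$ in the appropriate sense) topology on bounded sets — strong enough to be metrizable and make $X$ complete, but weak enough that the convex integration perturbation does not destroy closedness. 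The set of genuine solutions is then exactly $\{(\rho,u,\re) \in X : \re = 0\}$, which is closed in $X$.

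The core of the argument is a \textbf{perturbation lemma}: given $(\rho, u, \re) \in X$ with $\re \not\equiv 0$ and given $\delta > 0$, one can find $(\bar\rho, \bar u, \bar{\re}) \in X$ with $\|\bar{\re}\|$ strictly smaller (by a definite fraction of $\|\re\|$) such that $(\bar\rho - \rho, \bar u - u)$ is small in the metric of $X$ but \emph{not} small in, say, the $L^p_tL^p_x$-norm of $\rho$ — i.e.\ the perturbation has a quantified nontrivial oscillation. This is precisely where the construction of \cite{ModenaSattig2020} enters: its building blocks (concentrated Mikado-type densities and correctors) provide, for each smooth defect $\re$, a perturbation reducing the defect while being small in the weak topology, with the gain in $\|\re\|$ controlled under exactly the condition \eqref{eq:r-condition}. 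I would feed the stage estimates of that paper into this single-step lemma rather than iterating them. From the perturbation lemma one extracts, for each $n$, the open dense set $U_n \subset X$ of triples with $\|\re\| < 1/n$ \emph{together with} a lower bound on achievable oscillation; density uses the lemma, and openness uses lower semicontinuity of $\|\re\|$ under the weak topology together with the fact that the perturbation can be made robust. A Baire category argument then shows $\bigcap_n U_n$ is residual, and a standard argument (if a triple in the intersection had $\re \neq 0$, apply the lemma to contradict membership in some $U_n$) identifies the intersection with the solution set. Finally one upgrades from "solutions to the relaxed inclusion" to solutions of \eqref{eq:transport}: since $\re = 0$ on the residual set and $\div u = 0$, the equation $\partial_t \rho + \div(\rho u) = 0$ becomes $\partial_t\rho + u\cdot\nabla\rho = 0$, and the time-continuity in $L^p_x$ is built into $X$.

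The \textbf{main obstacle} I anticipate is the choice of topology making $X$ simultaneously complete and the defect functional lower semicontinuous, while still allowing the perturbation lemma to produce perturbations that are \emph{large} in a norm that is not controlled by the metric — this tension (small in the metric, not small in the essential oscillation) is exactly what forces the Baire argument to produce genuinely non-constructive, highly oscillatory solutions, and it is delicate because the density $\rho$ lives only in $L^p_x$ with $p$ possibly close to $1$ while $u$ must retain $W^{1,\tilde p}_x$ regularity. A secondary technical point is the regularization step: the defect $\re$ of a generic element of $X$ is merely a distribution/low-regularity object, whereas the building blocks of \cite{ModenaSattig2020} are calibrated against smooth $\re$; one must mollify, perturb, and absorb the mollification error into the new defect, checking that the metric-smallness and the defect-gain survive. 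Once these are in place, the conclusion that the solution set is a countable intersection of open dense sets, hence residual, is routine.
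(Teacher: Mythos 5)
Your overall impulse is right — this is a Kirchheim-style Baire argument fed by the Modena–Sattig perturbation step — but the functional setup you propose contains a contradiction and a semicontinuity mistake that would sink the argument, and the paper's actual route is structurally different in a way that matters.

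First, you observe that the solution set $\{(\rho,u,\re)\in X : \re=0\}$ is \emph{closed} in $X$. That observation is fatal to your plan: in a complete metric space a residual set is dense, and a closed dense set is the whole space. Since the solution set is certainly not all of $X$, it cannot be both closed and residual; the Baire argument you sketch would prove nothing. Second, the sets $U_n=\{\|\re\|<1/n\}$ are not open. Under any of the weak topologies you are contemplating, $\|\re\|$ (an $L^1$-type norm) is \emph{lower} semicontinuous, which makes $\{\|\re\|\le c\}$ closed and $\{\|\re\|>c\}$ open — the opposite of what you need. The ``together with a lower bound on achievable oscillation'' caveat does not repair this, and in any case making $\re$ an explicit component of the state is awkward: the defect $R$ associated to a pair $(\rho,u)$ is far from unique, and after completion a generic element of $X$ need not carry a distinguished $\re$ at all.

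The paper sidesteps both problems by never putting the defect into the state and by never trying to make ``small defect'' an open condition. The space $\X$ consists of pairs $(\rho,u)$ only; the defect $R$ is existentially quantified in the definition of $\X_0$. The metric is a \emph{strong} $C_t(L^1_x\times W^{1,\tilde p}_x)$ metric (so completion is easy), which, combined with the a priori $L^p\times L^{p'}$ bound \eqref{eq:sub-cond-trans}, yields weak $L^p\times L^{p'}$ convergence at each time. The Baire object is not a nested sequence of ``good'' open sets but a Baire-$1$ \emph{functional}: the energy deficit $I(\rho,u)=\max_t\bigl(e(t)-\tfrac1p\|\rho(t)\|_{L^p}^p-\tfrac1{p'}\|u(t)\|_{L^{p'}}^{p'}\bigr)$. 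Here the semicontinuity works in your favor: $E$ is weakly lower semicontinuous, hence $I$ is upper semicontinuous, hence Baire-$1$, hence its set of continuity points is residual. The perturbation Proposition~\ref{prop:lifting} (built on Proposition~2.1 of \cite{ModenaSattig2020}, reinforced by the lower bound \eqref{eq:inductive-R} on the cross term $(\rho_1-\rho_0)(u_1-u_0)$, which your sketch omits) shows that if $I(\rho,u)>0$ then $I$ cannot be continuous at $(\rho,u)$; so at continuity points $I=0$. Finally $I=0$ forces the energy to saturate, which by uniform convexity of $L^p$, $L^{p'}$ upgrades the weak convergence of any approximating sequence in $\X_0$ to strong convergence and simultaneously forces $\|R_k\|_{L^1}\to 0$ through the inequality \eqref{eq:sub-cond-trans} — delivering both a distributional solution and the prescribed energy profile, which is what gives strong time-continuity. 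None of these steps has an analogue in your open-dense-set scheme, and the energy-saturation mechanism is exactly what you would need to invent to make your outline work.
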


The statement of the theorem is deliberately vague here, because the precise statement (Theorem~\ref{thm:main}) requires some notation and definitions.

\subsection{Navier-Stokes equations}

We will then proceed to show that the same strategy can also be applied to the d-dimensional Navier-Stokes equations for $d\geq 3$. More precisely, we will show
	\begin{theorem}
		\label{thm:ns}
		If $r\in(1,\infty)$ is such that
		\begin{equation}\label{eq:p-q-condition}
		\frac{1}{2} + \frac{1}{r} > 1+\frac{1}{d}
		\end{equation}
		then the set of solutions to \eqref{eq:ns} in the class
		\begin{equation*} 
		v\in C\left( [0,T],L^{2}_x(\Td) \cap W^{1,r}_x(\Td) \right)
		\end{equation*}
		is generic (in the Baire sense).
	\end{theorem}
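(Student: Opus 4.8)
The plan is to reformulate \eqref{eq:ns} as a differential inclusion with a dissipative term and to run the Baire category scheme of Cellina, Dacorogna--Marcellini, Kirchheim and De~Lellis--Sz\'ekelyhidi \cite{cellina2005,dacorogna_marcellini97,kirchheim03,DeLellisSzekelyhidi09}, feeding it the intermittent building blocks of \cite{BurczakModenaSzekelyhidi20} only through a single perturbation lemma rather than through an explicit iteration. I would first fix a smooth, strictly positive energy profile $\bar e$ on $[0,T]$ and a radius $M>0$, and call a smooth triple $(v,p,\re)$, with $\re$ symmetric and trace-free, a \emph{strict subsolution} if $\partial_t v+\div(v\otimes v)+\nabla p=\Delta v+\div\re$ and $\div v=0$, if $\|v(\cdot,t)\|_{W^{1,r}_x(\Td)}<M$ for every $t$, and if the pointwise constraint of the Navier--Stokes differential inclusion (as in \cite{DeLellisSzekelyhidi09}, with the $\Lambda$-convex hull adapted to absorb the term $\Delta v$) holds strictly, so that in particular the gap density $e_0(v)(t):=\bar e(t)-\int_{\Td}\tfrac12|v(x,t)|^2\,dx$ is strictly positive. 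Let $X_0$ be the resulting (nonempty: $v\equiv0$ works) set and let $\X$ be its closure in $C\bigl([0,T],(L^2_x\cap W^{1,r}_x)_w\bigr)$, the space of curves into the $M$-ball of $L^2_x\cap W^{1,r}_x$ carrying the weak topology; since $1<r<\infty$ this ball is weakly compact and metrizable, so $\X$ is a complete metric space, and I write $v_n\tox v$ for convergence in it and $d_\X$ for its metric.

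Passing the linear equation and the pointwise constraint to weak limits, while carrying along the weak-$\star$ limit of $v_n\otimes v_n+\re_n$, shows that every $v\in\X$ is divergence free, bounded by $M$ in $W^{1,r}_x$, and solves \eqref{eq:ns} distributionally up to a limiting defect that, by the geometry of the $\Lambda$-convex hull, vanishes exactly when the energy is saturated; thus $v\in\X$ is a genuine weak solution of \eqref{eq:ns} in the class of Theorem~\ref{thm:ns} if and only if $\int_{\Td}\tfrac12|v(t)|^2\,dx=\bar e(t)$ for a.e.\ $t$. Now consider $I\colon\X\to\R$, $I(v)=\int_0^T\!\!\int_{\Td}\tfrac12|v|^2\,dx\,dt$: being convex and strongly continuous it is, by Fatou's lemma, lower semicontinuous on $\X$, hence of Baire class one, so its set $\C(I)$ of continuity points is a dense $G_\delta$ in $\X$; and since $I\le\int_0^T\bar e$ on $\X$ (the energy inequality survives weak limits), the solution set equals $\{v\in\X:I(v)=\int_0^T\bar e\}$, which is itself $G_\delta$. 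It therefore suffices to prove that every $v\in\C(I)$ saturates the energy.

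This is where \cite{BurczakModenaSzekelyhidi20} enters: I would extract from that construction a one-step perturbation lemma asserting a universal $\beta\in(0,1)$ such that for each $v_0\in X_0$ and each $\e>0$ there is $v_1\in X_0$ with $d_\X(v_1,v_0)<\e$ and $I(v_1)\ge I(v_0)+\beta\int_0^Te_0(v_0)(t)\,dt$, where the perturbation $w=v_1-v_0$ is assembled from highly concentrated, rapidly oscillating blocks so that $w\weakstar0$ along refinements of the parameters, $\|w(t)\|_{L^2_x}^2$ is comparable to the local gap $e_0(v_0)(t)$, and $\|w(t)\|_{W^{1,r}_x}$ is as small as we wish. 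Granting this, the scheme closes in the standard way: if some $v\in\C(I)$ had positive total gap $\gamma:=\int_0^Te_0(v)(t)\,dt>0$, then by density of $X_0$ and continuity of $I$ at $v$ one may pick $\tilde v\in X_0$ so $\X$-close to $v$, and with $I(\tilde v)$ so close to $I(v)$, that $\tilde v$ still has total gap at least $\gamma/2$; the lemma applied to $\tilde v$ then produces points of $X_0$ arbitrarily $\X$-close to $v$ on which $I$ exceeds $I(v)+\beta\gamma/4$, contradicting continuity of $I$ at $v$. Hence $\C(I)\subseteq\{v\in\X:I(v)=\int_0^T\bar e\}$ consists of weak solutions of \eqref{eq:ns}, and, being a dense $G_\delta$ in the complete metric space $\X$, the solution set is generic.

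The main obstacle is the perturbation lemma, and inside it the requirement that the perturbation remain in the fixed $W^{1,r}_x$-ball while cancelling a Reynolds stress of size $\int_0^Te_0(v_0)$ in $L^1_{t,x}$: cancelling such a stress pins the $L^2_x$ amplitude of $w$ at order $e_0(v_0)^{1/2}$, and then smallness in $W^{1,r}_x$ — which moreover costs an extra factor of the oscillation frequency — is only affordable because $r$ is small. Indeed \eqref{eq:p-q-condition} is equivalent to $r<\tfrac{2d}{d+2}<2$, that is, to being strictly inside the intermittency regime, and it is precisely the margin in \eqref{eq:p-q-condition} that lets one choose concentration and oscillation parameters making, simultaneously, the $L^2_x$ contribution of $w$ of order one, its $W^{1,r}_x$ contribution arbitrarily small, and the Reynolds-stress and time-derivative error terms negligible. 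The two points requiring genuine work are therefore (i) isolating this single perturbation step from the iterative construction of \cite{BurczakModenaSzekelyhidi20}, with the $W^{1,r}_x$-smallness made quantitative through \eqref{eq:p-q-condition}, and (ii) verifying that the $\Lambda$-convex-hull geometry of the dissipative Navier--Stokes inclusion really forces the limiting defect to vanish at energy saturation, so that the residual elements of $\X$ are bona fide weak solutions.
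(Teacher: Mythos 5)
Your overall Baire scheme — build a complete metric space of subsolutions, define a Baire-1 energy functional, show continuity points saturate the energy, show saturation gives genuine weak solutions, and drive the whole thing by a single perturbation lemma distilled from \cite{BurczakModenaSzekelyhidi20} — is indeed the paper's strategy. But the functional setup you propose is not the one the paper uses, and at one crucial point it cannot work.

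The central difficulty is your reliance on the \emph{pointwise} differential inclusion and $\Lambda$-convex hull framework of \cite{DeLellisSzekelyhidi09}. That framework is tied to the $L^\infty$ theory: the constraint set is a pointwise algebraic condition, and the perturbations in that setting are small in $L^\infty$ so they can be arranged to preserve the strict inclusion. The intermittent building blocks of \cite{BurczakModenaSzekelyhidi20} are the opposite: Mikado flows concentrated on thin tubes are \emph{large} in $L^\infty$ (the whole point of intermittency is to trade $L^\infty$ size for support measure, so as to control the dissipative term $\Delta v$). Such perturbations cannot preserve any pointwise algebraic constraint, so no pointwise relaxation of \eqref{eq:ns} can serve as the subsolution set here, and there is no $\Lambda$-convex hull geometry to invoke. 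The paper instead defines subsolutions purely via the Navier–Stokes–Reynolds system \eqref{eq:NS-def} together with the \emph{integral} constraint \eqref{eq:sub-cond-ns}, $\tfrac12\|v(t)\|_{L^2}^2 + d\|\re(t)\|_{L^1}<e(t)$; there is no pointwise inclusion and no convex hull. Correspondingly, the reason that a saturated energy forces $v$ to be a solution is \emph{not} pointwise geometry: it is that the approximating energies $E(v_k)(t)$ must converge to $E(v)(t)$, uniform convexity of $L^2$ then upgrades the weak $L^2$ convergence to strong, which lets one pass to the limit in $v_k\otimes v_k$, while the constraint \eqref{eq:sub-cond-ns} squeezes $\|\re_k(t)\|_{L^1}\to 0$. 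You actually have this mechanism within reach (you mention the $L^1$ size of the Reynolds stress), but you attribute the vanishing of the defect to the wrong cause.

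Two further mismatches. First, the topology: you close $\X_0$ in $C\bigl([0,T],(L^2_x\cap W^{1,r}_x)_w\bigr)$, i.e.\ in the weak topology. That closure gives at best $v\in L^\infty_t W^{1,r}_x$ with weak time-continuity — not the strong time-continuity $v\in C([0,T],W^{1,r}_x)$ claimed in Theorem~\ref{thm:ns}. The paper uses the metric $d_\X(v,v')=\sup_t\|v(t)-v'(t)\|_{W^{1,\tilde p}}$ and closes in $C([0,T],W^{1,\tilde p})$, so strong $W^{1,\tilde p}$-time-continuity is built into $\X$ from the start (this is affordable precisely because the perturbations are small in strong $W^{1,\tilde p}$, cf.\ \eqref{eq:NS-inductive-Sobolev-R}); the weak $L^2$ information is then a consequence, via the a priori bound \eqref{eq:sub-cond-ns} and Remark~\ref{remark:bddX-ns}. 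Second, the functional: you take $I(v)=\int_0^T\int_{\Td}\tfrac12|v|^2$, whereas the paper takes the pointwise-in-time gap $I(v)=\max_t\bigl(e(t)-\tfrac12\|v(t)\|_{L^2}^2\bigr)$. Your time-integrated version gives energy equality only for a.e.\ $t$ and makes the step from ``$I$ saturated'' to ``$v\in C([0,T],L^2)$'' awkward; the sup-in-time gap gives equality for every $t$ directly and makes the upper-semicontinuity argument (Claim~\ref{clm:I-Baire-1-ns}, via Banach–Alaoglu along subsequences of times) clean.

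So the proposal identifies the right high-level scaffolding, and correctly anticipates that the heart of the matter is extracting a single quantitative perturbation step from \cite{BurczakModenaSzekelyhidi20} with $W^{1,r}$ smallness coming from \eqref{eq:p-q-condition}; but the concrete relaxation (pointwise inclusion / $\Lambda$-convex hull) is incompatible with intermittent perturbations and must be replaced by the Reynolds-defect system with the $L^1$ bound \eqref{eq:sub-cond-ns}, the closure topology should be the strong $C_t W^{1,\tilde p}_x$ one, and the functional should be the sup-in-time energy gap.
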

	
The more precise statement and its proof will be given in Section \ref{sec:ns}.

\subsection{Outline of the Baire category method}
\label{subsec:baire-outline}

By the term Baire method we mean the following framework for proving genericity (and along the way existence) of a class of objects with the desired properties. Its ingredients are three objects:
\begin{enumerate}
    \item a set $Y_0$ of `good objects',
    \item a metric $d$ on $Y_0$ and the completion $Y=\overline{Y_0}$ with respect to this metric
    \item a functional $F$ on $Y$ which is of Baire class 1, i.e.~pointwise limit of functionals which are continuous with respect to~$d$.
\end{enumerate}
Furthermore it requires two simple statements:
\begin{enumerate}[(i)]
    \item if $F(y)=0$ for some $y\in Y$ then $y$ has the desired properties,
    \item if $F(y)\ne 0$ then $F$ is discontinuous at $y$.
\end{enumerate}
Since the points of continuity of a map of Baire class 1 form a residual set it follows that the set $\{y\in Y \text{ s.th.~$y$ has the desired properties}\}$ is a residual set in $Y$, i.e.~its complement is a meager set, a countable union of nowhere dense sets. In particular, since $Y$ is a complete metric space, Baire's theorem implies that the desired objects are dense in $Y$, and there are infinitely many of them, if the cardinality of $Y_0\subset Y$ is infinite.

Here and in other convex integration results $Y_0$ is the space of smooth `subsolutions', i.e.~solutions of a relaxed equation, and $d$ is a metric inducing the weak or weak-* topology on a bounded subset of a Banach space. There is some freedom in the choice of the functional; here it will be analog to the kinetic energy in the Euler case. The statement (i) follows directly form the `functional setup', i.e.~the choice of the objects mentioned above, while (ii) requires the perturbation proposition, an explicit construction.

	The article is organized as follows. In Sections \ref{sec:functional} and \ref{sec:functional-ns} we define the functional setup, state the results in full precision and apply the Baire category method to reduce it to a quantitative perturbation property (Propositions \ref{prop:lifting} and \ref{prop:perturbation-ns}). In turn, these will be proved in Sections \ref{sec:perturbation} and \ref{sec:perturbation-ns}, where we heavily refer to the convex integration constructions in  \cite{ModenaSattig2020} (for the transport equation) and \cite{BurczakModenaSzekelyhidi20} (for the Navier-Stokes equations).

	\subsection*{Acknowledgements }
	G.S.~thanks Luigi de Rosa for useful discussions on the topic of this paper.\\
	This work was supported by the European Research Council (ERC) under the European Union’s Horizon 2020 research and innovation programme (grant agreement No.724298-DIFFINCL).

	\section{The Transport Equation}
	\label{sec:transport}
	Throughout this article we assume that $p,\tilde{p}$ fulfil the assumptions of Theorem~\ref{thm:transport}. Any unspecified Lebesgue norm $\|\cdot\|_{L^p}$ refers to the spatial domain $\Td$. 
	\subsection{Functional setup}
	\label{sec:functional}
	\subsubsection{The space of subsolutions and its metric}
	
	Let $e:[0,T]\to \R_{>0}$ be a strictly positive smooth function and $M\geq 1$ a constant to be determined later (see Remark~\ref{rem:C-def}). A subsolution is smooth tuple of functions $(\rho,u):[0,T]\times \Td\to \R\times\R^d$ such that there is a smooth vector field $R:[0,T]\times \Td\to \R^d$ such that the triple $(\rho,u,R)$ solves the transport-defect equation
\begin{subequations}
\begin{equation}\label{eq:trans-def}
\begin{split}
\partial_t \rho + u\cdot\nabla\rho &= -\div R \\
\div u &=0
\end{split}
\end{equation}
and satisfies 
\begin{equation}\label{eq:sub-cond-trans}
		\frac{1}{p} \|\rho(t)\|_{L^p}^p + \frac{1}{p'} \|u(t)\|_{L^{p'}}^{p'} + M\|R(t)\|_{L^1} < e(t) \quad\textrm{ for all }t\in[0,T].
\end{equation}
\end{subequations}

We call $\X_0$ the space of all subsolutions and equip it with the metric $d_{\X}$, given by
\begin{equation*} 
d_{\X} \left( (\rho,u), (\rho',u') \right) \coloneqq \sup_{t\in[0,T]} \Big( \|\rho(t)-\rho'(t)\|_{L^1}  + \|u(t)-u'(t)\|_{W^{1,\tilde{p}}} \Big).
\end{equation*}
Now define $\X\subset C\left( [0,T], L^1\times W^{1,\tilde{p}}  \right)$ to be the completion of $\X_0$ in the topology induced by $d_{\X}$. It is clear that $\X$ is a complete metric space and thus a Baire space and it has infinite cardinality. We will write $\tox$ for convergence in~$d_{\X}$.
	
\begin{remark}\label{remark:bddX}
	Observe that, because tuples in $\X_0$ take values in a bounded subset of $L^p\times L^{p'}$ by \eqref{eq:sub-cond-trans}, so do elements of $\X$. Indeed, this follows from Fatou's lemma. Since $p,p'\in (1,\infty)$, it then follows that $(\rho_k,u_k)\tox(\rho,u)$ implies $\rho_k(t) \rightharpoonup \rho(t)$ weakly in $L^p$ and $u_k (t) \rightharpoonup u(t)$ weakly in $L^{p'}$ for every $t$. 
\end{remark}

\subsubsection{Energy functional and statement of the main result}
	
We define a functional on $\X$ which measures the maximum distance of the energy of a tuple of functions from the given energy profile.
\begin{subequations}
	\begin{align}
	E(\rho,u)(t)&\coloneqq \frac{1}{p} \|\rho(t)\|_{L^p}^p +\frac{1}{p'} \|u(t)\|_{L^{p'}}^{p'} \label{eq:def-E} \\
	i(\rho,u)(t) &\coloneqq \Bigl( e(t)-E(\rho,u)(t)\Bigr)
	\label{eq:def-i} \\
	I(\rho,u) &\coloneqq \max_{t\in[0,T]} i(\rho,u)(t)
	\label{eq:def-I}
	\end{align}
\end{subequations}
By \eqref{eq:sub-cond-trans} and the compactness of the time interval $i$ and $I$ take only strictly positive values on $\X_0$. Using Remark \ref{remark:bddX} we deduce that $I\geq 0$ on $\X$.
	
We can now formulate the main theorem.
\begin{theorem}\label{thm:main}
	The set of functions $(\rho,u)\in \X$ which
	\begin{enumerate}
		\item are strongly continuous in time, i.e.~$(\rho,u)\in C([0,T];L^p\times L^{p'})$,
		\item solve the transport equation \eqref{eq:transport} in the sense of distributions,
		\item have energy profile $e$, i.e. $E(\rho,u)(t)=e(t)$ for all $t\in[0,T]$
	\end{enumerate}
	is residual in X.
\end{theorem}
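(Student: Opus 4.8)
The plan is to follow the Baire category framework described in Section~\ref{subsec:baire-outline} with $Y_0=\X_0$, $Y=\X$, and $F=I$ the functional defined in \eqref{eq:def-I}. The first task is to verify the two structural statements (i) and (ii). For (i), I would show that if $I(\rho,u)=0$ for some $(\rho,u)\in\X$, then $(\rho,u)$ has all three properties claimed in the theorem. The energy profile property (3) is immediate: $I(\rho,u)=0$ together with $I\ge 0$ forces $i(\rho,u)(t)\equiv 0$, i.e. $E(\rho,u)(t)=e(t)$ for all $t$. Strong continuity (1) then follows by combining the weak continuity of $t\mapsto\rho(t),u(t)$ (which comes from $(\rho,u)\in C([0,T];L^1\times W^{1,\tilde p})$ together with the uniform $L^p\times L^{p'}$ bound from Remark~\ref{remark:bddX}) with continuity of the norms $t\mapsto\|\rho(t)\|_{L^p},\|u(t)\|_{L^{p'}}$, using the uniform convexity of $L^p$ and $L^{p'}$ for $p,p'\in(1,\infty)$ (weak convergence plus norm convergence implies strong convergence in a uniformly convex space). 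For the distributional transport equation (2), I would argue that the defect $R$ vanishes in the limit: passing to a sequence $(\rho_k,u_k)\tox(\rho,u)$ in $\X_0$, the condition \eqref{eq:sub-cond-trans} gives $M\|R_k(t)\|_{L^1}<e(t)-E(\rho_k,u_k)(t)\to e(t)-E(\rho,u)(t)=0$, so $R_k\to 0$ in $L^1$, and one can pass to the limit in the weak formulation of \eqref{eq:trans-def} (the nonlinear term $u_k\cdot\nabla\rho_k=\div(u_k\rho_k)$ converges because $u_k\to u$ strongly in $L^{p'}_{t,x}$ and $\rho_k\rightharpoonup\rho$ weakly in $L^p_{t,x}$, their product being a product of strong and weak convergence in dual exponents).

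The second task is to show that $I$ is of Baire class $1$, i.e. a pointwise limit of $d_\X$-continuous functionals. The natural approach, standard in this circle of ideas, is to write $I(\rho,u)=\lim_{k}I_k(\rho,u)$ where $I_k$ is built from mollification in space (or a finite-dimensional truncation of the Fourier series) so that $E$ becomes continuous with respect to the weak topology — on the bounded set where $\X$ lives, weak convergence plus a fixed mollification makes the $L^p$ and $L^{p'}$ norms continuous, since convolution with a fixed kernel turns weak convergence into strong convergence. One then takes the $\max$ over $t$ of the mollified energy gap; the $\max$ over a compact interval of a family of continuous functions that is equicontinuous in $t$ is again continuous, and $I_k\to I$ pointwise as the mollification parameter vanishes (by lower semicontinuity of $E$ one only needs $I_k\ge I$ and $\limsup I_k\le I$, which follows from strong approximation of each fixed element). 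This is bookkeeping rather than a genuine difficulty.

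The heart of the matter, and the main obstacle, is statement (ii): if $I(\rho,u)>0$ then $I$ is discontinuous at $(\rho,u)$. Equivalently, given a subsolution $(\rho,u)\in\X_0$ with $\min_t\bigl(e(t)-E(\rho,u)(t)\bigr)=:\delta>0$ and an approximation parameter $\e>0$, I must produce another subsolution $(\bar\rho,\bar u)\in\X_0$ with $d_\X\bigl((\rho,u),(\bar\rho,\bar u)\bigr)<\e$ but with $I(\bar\rho,\bar u)\le I(\rho,u)-\alpha(\delta)$ for some fixed modulus $\alpha(\delta)>0$ — the perturbation increases the kinetic-plus-density energy toward $e$ by a definite amount while changing $(\rho,u)$ by an arbitrarily small amount in the weak metric $L^1\times W^{1,\tilde p}$. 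This is exactly the quantitative perturbation property and it is where the intermittent convex integration construction of \cite{ModenaSattig2020} enters: the added perturbation is a highly concentrated, highly oscillatory building block (intermittent jets) that carries order-one $L^p$ and $L^{p'}$ mass, has small $L^1$ norm (density) and small $W^{1,\tilde p}$ norm (velocity) precisely under the condition \eqref{eq:r-condition}, and reduces the new defect $R$ in $L^1$ so that \eqref{eq:sub-cond-trans} is preserved. I would isolate this as Proposition~\ref{prop:lifting} (the "lifting" property) and defer its proof to Section~\ref{sec:perturbation}; granting it, the discontinuity at $(\rho,u)$ follows because $I$ cannot be continuous at a point from which one can step an arbitrarily small distance and drop the value of $I$ by a fixed amount. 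Once (i), (ii) and the Baire-class-$1$ property are in hand, the abstract Baire theorem for points of continuity of Baire-$1$ functions on the complete metric space $\X$ yields that $\{I=0\}$ is residual, and by the argument in (i) this set is contained in — in fact equal to — the set of $(\rho,u)$ with the three desired properties, completing the proof.
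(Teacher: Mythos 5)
Your outline follows the same Baire category framework as the paper and correctly identifies its three moving parts: the Baire-$1$ property of $I$, the implication $I=0\Rightarrow$ (strong continuity, weak solution, prescribed energy), and the discontinuity of $I$ wherever $I>0$ via the quantitative perturbation Proposition~\ref{prop:lifting}. Your treatment of the second and third of these matches the paper in substance. For the implication $I=0\Rightarrow$ solution, the paper derives strong $L^p\times L^{p'}$ convergence of the whole approximating sequence $(\rho_k,u_k)(t)$ from $E(\rho_k,u_k)(t)\to e(t)=E(\rho,u)(t)$ plus weak convergence plus uniform convexity, and then passes to the limit in the product $\rho_k u_k$ by H\"older; your weak-times-strong pairing would also work, but note that the strong $L^{p'}$ convergence of $u_k$ that you invoke is not free --- it comes from exactly the same uniform-convexity argument you already used for strong continuity in time, applied to the approximating sequence, so you should make that step explicit rather than treating the two as separate.

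The one place where you genuinely diverge from the paper is the Baire-$1$ property, and your route has a gap you should not dismiss as bookkeeping. The paper proves directly that $I$ is \emph{upper semicontinuous} on $\X$ (taking a sequence $(\rho_n,u_n)\tox(\rho,u)$, times $t_n\to t_0$, and using Banach--Alaoglu plus weak lower semicontinuity of the $L^p$ and $L^{p'}$ norms), then cites the classical fact that a bounded upper semicontinuous function on a metric space is of Baire class $1$. Your alternative is to mollify in space and take $\max_t$ of the mollified energy gap. The pointwise-in-$t$ convergence $i_\epsilon(t)\to i(t)$ as $\epsilon\to 0$ is fine, and monotonicity $i_\epsilon\ge i$ is fine since mollification is a contraction on $L^p(\Td)$. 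But to conclude $\max_t i_\epsilon(t)\to\max_t i(t)$ you need uniformity in $t$, and you assert this via ``equicontinuity in $t$'' of the mollified energies. That equicontinuity does not obviously hold: for a general $(\rho,u)\in\X$ the map $t\mapsto\rho(t)$ is only $L^1$-strongly and $L^p$-weakly continuous, so $\|\rho_\epsilon(t)-\rho(t)\|_{L^p}\to 0$ pointwise in $t$ but with a rate that may vary with $t$ if $\rho(t)$ develops small-scale structure at different rates at different times. You would need either a Dini-type argument (which requires knowing that the limit $i(t)$ is continuous --- not a priori true on $\X$) or some additional equi-integrability, and neither is trivial. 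The paper's upper-semicontinuity route sidesteps all of this and is strictly simpler; I recommend adopting it.
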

The theorem is proved by concatenating three statements about the space $\X$ and the functional $I$.
	
	\begin{claim}
		The functional $I$ is a Baire-1-map on $\X$.
		\label{clm:I-Baire-1}
	\end{claim}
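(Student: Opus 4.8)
The plan is to show that $I$ is a pointwise limit of $d_\X$-continuous functionals. The natural candidates are the "truncated" energy functionals: for each $k\in\N$, pick a finite set of times $0=t_0<t_1<\dots<t_{N_k}=T$ forming a $\tfrac1k$-net of $[0,T]$, and set
\begin{equation*}
I_k(\rho,u) \coloneqq \max_{0\le j\le N_k} i(\rho,u)(t_j).
\end{equation*}
First I would argue that each $I_k$ is continuous on $\X$: convergence $(\rho_n,u_n)\tox(\rho,u)$ gives, by Remark~\ref{remark:bddX}, weak convergence $\rho_n(t_j)\rightharpoonup \rho(t_j)$ in $L^p$ and $u_n(t_j)\rightharpoonup u(t_j)$ in $L^{p'}$ for each fixed $j$. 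Since norms are weakly lower semicontinuous, $E(\rho,u)(t_j)\le \liminf_n E(\rho_n,u_n)(t_j)$, hence $i(\rho,u)(t_j)\ge \limsup_n i(\rho_n,u_n)(t_j)$; so $I_k$ is upper semicontinuous. Here is where the main subtlety lies: weak convergence does \emph{not} by itself give the reverse inequality, so continuity of $I_k$ is not automatic. However, $d_\X$ also controls the $L^1$-distance of $\rho_n(t_j)\to\rho(t_j)$ and the $W^{1,\tilde p}$-distance of $u_n(t_j)\to u(t_j)$ strongly; combined with the uniform $L^p$, $L^{p'}$ bounds from \eqref{eq:sub-cond-trans}, an interpolation argument upgrades this to strong convergence in every $L^q$ with $q<p$ (resp. $q<p'$), and then one handles the borderline exponent by a uniform-integrability/Vitali argument using the uniform bound in the strictly larger exponent. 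This yields $E(\rho_n,u_n)(t_j)\to E(\rho,u)(t_j)$, hence $i(\rho,u)(t_j)=\lim_n i(\rho_n,u_n)(t_j)$, so each $I_k$ is genuinely $d_\X$-continuous.

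Next I would show $I_k \to I$ pointwise on $\X$. Clearly $I_k(\rho,u)\le I(\rho,u)$ since the maximum over a finite set is no larger than the maximum over $[0,T]$. For the reverse, fix $(\rho,u)\in\X$ and let $t^\star$ be a time achieving $I(\rho,u)=i(\rho,u)(t^\star)$. For an element of the dense subset $\X_0$ the map $t\mapsto i(\rho,u)(t)$ is continuous (indeed smooth), so choosing $t_j$ within $\tfrac1k$ of $t^\star$ gives $i(\rho,u)(t_j)\to i(\rho,u)(t^\star)$ as $k\to\infty$, proving $I_k(\rho,u)\to I(\rho,u)$ for $(\rho,u)\in\X_0$. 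For a general element of $\X$ one must check that $t\mapsto i(\rho,u)(t)$, while perhaps not continuous, is at least such that the finite-net maxima still converge to the supremum; this follows because $\X\subset C([0,T];L^1\times W^{1,\tilde p})$, so $t\mapsto E(\rho,u)(t)$ is lower semicontinuous (again by weak lower semicontinuity of norms against the strong $L^1$/$W^{1,\tilde p}$ time-continuity plus uniform bounds), hence $t\mapsto i(\rho,u)(t)$ is upper semicontinuous and attains its max, and an upper semicontinuous function on a compact set is the decreasing limit of its maxima over finer and finer finite nets.

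The main obstacle, as flagged above, is promoting the weak convergence at fixed times to convergence of the \emph{nonlinear} quantities $\|\rho_n(t_j)\|_{L^p}^p$ and $\|u_n(t_j)\|_{L^{p'}}^{p'}$ — weak convergence alone only gives lower semicontinuity, which by itself yields Baire-2 (difference of semicontinuous functions), not Baire-1. The interpolation-plus-uniform-integrability step is what rescues continuity of the $I_k$, and it is exactly here that the specific choice of the metric $d_\X$ (strong $L^1$ in $\rho$, strong $W^{1,\tilde p}$ in $u$) together with the a priori $L^p\times L^{p'}$ bounds from the subsolution constraint \eqref{eq:sub-cond-trans} is used in an essential way. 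Once this is in place, $I=\lim_k I_k$ with each $I_k$ continuous, so $I$ is of Baire class $1$, which is Claim~\ref{clm:I-Baire-1}.
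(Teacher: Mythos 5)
The key step in your proposal --- that each $I_k$ is genuinely $d_\X$-continuous --- is false, and the ``interpolation plus Vitali'' argument offered to justify it cannot be made to work. Strong $L^1$ convergence of $\rho_n(t_j)$ together with a uniform $L^p$ bound does \emph{not} upgrade to convergence of $\|\rho_n(t_j)\|_{L^p}$: a sequence of concentrating bumps (e.g.\ $f_n=n^{d/p}\chi_{B_{1/n}}$) converges to $0$ in $L^1$ and in every $L^q$ with $q<p$, remains bounded in $L^p$, yet $\|f_n\|_{L^p}$ does not converge to $0$. Interpolation only reaches exponents strictly below $p$, and a uniform $L^p$ bound does not give uniform integrability of $|f_n|^p$, which is exactly what a Vitali argument at the borderline exponent would require. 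The same failure occurs for $u$: condition \eqref{eq:r-condition} is precisely the statement that $W^{1,\tilde p}$ does \emph{not} embed into $L^{p'}$, so $d_\X$ control on $u_n-u$ cannot yield strong $L^{p'}$ convergence either. Indeed, if $E(\cdot)(t)$ were $d_\X$-continuous for each fixed $t$, then $I$ would be lower semicontinuous (as a supremum of continuous maps), and since it is also upper semicontinuous (by the weak lower semicontinuity argument you correctly noted), $I$ would be continuous on $\X$; but then Claim~\ref{clm:continuity-to-energy} would force $I\equiv0$ on $\X$, contradicting $I>0$ on the dense subset $\X_0$. The nontriviality of the whole scheme rests on $I$ being discontinuous, so any purported proof of continuity of the $I_k$ must break down.

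The paper instead proves directly that $I$ itself is upper semicontinuous --- this needs a small diagonal/compactness argument over the times $t_n$ (not just fixed-time lower semicontinuity, since a pointwise supremum of upper semicontinuous functions is not in general upper semicontinuous) --- and then invokes the classical fact that a bounded upper semicontinuous function on a metric space is of Baire class $1$ (e.g.\ via decreasing Lipschitz approximations $f_n(x)=\sup_y\bigl(f(y)-n\,d(x,y)\bigr)$; the paper cites Bourbaki). Your observation about weak lower semicontinuity of the norms is the correct ingredient; the fix is to drop the attempt at two-sided continuity, establish upper semicontinuity of $I$ by the compactness-in-time argument, and conclude by the bounded-usc-implies-Baire-1 lemma rather than by constructing continuous approximants $I_k$.
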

	
	\begin{claim}
		If $(\rho,u)\in \X$ is a point of continuity of $I$ then $I(\rho,u)=0$.
		\label{clm:continuity-to-energy}
	\end{claim}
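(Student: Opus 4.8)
The plan is to realize this claim as step~(ii) of the Baire scheme from Section~\ref{subsec:baire-outline}: it is exactly here that the convex integration construction enters, through a single quantitative perturbation property, after which a short soft argument finishes the proof. Concretely, the statement I would extract and prove in Section~\ref{sec:perturbation} (this is Proposition~\ref{prop:lifting}) is that there is a universal constant $\kappa\in(0,1)$ such that for every subsolution $z\in\X_0$ and every $\e>0$ there exists a subsolution $\tilde z\in\X_0$ with $d_\X(\tilde z,z)<\e$ and $I(\tilde z)\le\kappa\,I(z)$. Heuristically this holds because adding a suitable intermittent perturbation to $(\rho,u)$ can raise the energy $E(z)(t)$ towards $e(t)$ by a fixed fraction of the gap $i(z)(t)$, uniformly in $t$, while remaining negligible in $d_\X$: by Remark~\ref{remark:bddX} the metric $d_\X$ only metrizes a weak-type topology on the bounded set where subsolutions live, and in that topology $E$ is lower semicontinuous but not continuous. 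For the present claim I would use this proposition as a black box.

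Granting it, the claim follows in a few lines. Let $z_0\in\X$ be a point of continuity of $I$. Since $\X=\overline{\X_0}$, pick $z_k\in\X_0$ with $z_k\tox z_0$, and for each $k$ apply Proposition~\ref{prop:lifting} with $\e=1/k$ to obtain $\tilde z_k\in\X_0$ with $d_\X(\tilde z_k,z_k)<1/k$ and $I(\tilde z_k)\le\kappa\,I(z_k)$. By the triangle inequality $\tilde z_k\tox z_0$ as well, so continuity of $I$ at $z_0$ applied to both sequences gives $I(z_k)\to I(z_0)$ and $I(\tilde z_k)\to I(z_0)$; letting $k\to\infty$ in $I(\tilde z_k)\le\kappa\,I(z_k)$ yields $I(z_0)\le\kappa\,I(z_0)$, and since $I\ge0$ on $\X$ and $\kappa<1$ this forces $I(z_0)=0$.

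The main obstacle is not in this claim but in Proposition~\ref{prop:lifting}. Two features of it are essential and delicate: the contraction factor $\kappa$ must be uniform — independent of the subsolution $z$, hence of $k$ in the argument above — since if it were allowed to degenerate to $1$ the contradiction would collapse; and the perturbed tuple must remain a genuine subsolution, which means the new defect field $R$ must be controlled in $L^1$ tightly enough that the strict inequality \eqref{eq:sub-cond-trans} survives the gain in energy. Both are handled by the intermittent convex integration construction of \cite{ModenaSattig2020} in Section~\ref{sec:perturbation}; inside the present claim the only thing to watch is the harmless bookkeeping of the two convergences $z_k\tox z_0$ and $\tilde z_k\tox z_0$ together with a correct invocation of sequential continuity of $I$ at $z_0$.
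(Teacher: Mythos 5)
Your proposal is correct and rests on exactly the same key ingredient as the paper, namely Proposition~\ref{prop:lifting}; the paper then concludes by contradiction, fixing an auxiliary $\beta\in(\alpha,1)$, normalizing $I(\rho_n,u_n)\le\tfrac{\beta}{\alpha}I(\rho,u)$, and extracting a diagonal sequence $(\rho_{n,n},u_{n,n})$. Your direct wrap-up --- pick $\tilde z_k$ within $1/k$ of $z_k$, use sequential continuity of $I$ at $z_0$ on both sequences, and pass to the limit in $I(\tilde z_k)\le\kappa I(z_k)$ to get $I(z_0)\le\kappa I(z_0)$ --- is a clean and slightly more economical rendering of the same argument, avoiding the contradiction structure, the auxiliary parameter $\beta$, and the explicit diagonalization.
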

	
	\begin{claim}
		If $(\rho,u)\in \X$ such that $I(\rho,u) =0$, then $(\rho,u)$ is strongly continuous in $L^p\times L^{p'}$ and a solution of \eqref{eq:transport}.
		\label{clm:energy-to-solution}
	\end{claim}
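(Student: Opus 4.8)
The plan is to exploit the vanishing of $I$ to turn the weak convergence coming from the $\X$-topology into strong convergence, and then pass to the limit in the equation. Fix a sequence $(\rho_k,u_k)\in\X_0$, with defects $R_k$ solving \eqref{eq:trans-def} and obeying \eqref{eq:sub-cond-trans}, with $(\rho_k,u_k)\tox(\rho,u)$. By Remark~\ref{remark:bddX}, $\rho_k(t)\rightharpoonup\rho(t)$ in $L^p$ and $u_k(t)\rightharpoonup u(t)$ in $L^{p'}$ for every $t$; weak lower semicontinuity of the norms together with \eqref{eq:sub-cond-trans} (discarding the nonnegative term $M\|R_k(t)\|_{L^1}$) gives $E(\rho,u)(t)\le\liminf_k E(\rho_k,u_k)(t)\le e(t)$, i.e.~$i(\rho,u)(t)\ge0$ for all $t$, so $0=I(\rho,u)=\max_t i(\rho,u)(t)$ forces $E(\rho,u)(t)=e(t)$ for every $t$. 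From this I would extract two things. Integrating \eqref{eq:sub-cond-trans} in time and using Fatou's lemma (the integrands being nonnegative) with $\liminf_kE(\rho_k,u_k)(t)\ge E(\rho,u)(t)=e(t)$,
\[
\limsup_k\;M\int_0^T\|R_k(t)\|_{L^1}\,dt\;\le\;\int_0^T e(t)\,dt-\liminf_k\int_0^T E(\rho_k,u_k)(t)\,dt\;\le\;0,
\]
hence $R_k\to0$ in $L^1([0,T]\times\Td)$; and for each fixed $t$ the squeeze $e(t)=E(\rho,u)(t)\le\liminf_kE(\rho_k,u_k)(t)\le\limsup_kE(\rho_k,u_k)(t)\le e(t)$ gives $E(\rho_k,u_k)(t)\to e(t)$.

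The heart of the argument, and the step I expect to be the only nonroutine one, is upgrading the pointwise-in-time weak convergences to strong ones. For fixed $t$ the two quantities $\tfrac1p\|\rho_k(t)\|_{L^p}^p$ and $\tfrac1{p'}\|u_k(t)\|_{L^{p'}}^{p'}$ are, in the limit, bounded below by $\tfrac1p\|\rho(t)\|_{L^p}^p$ and $\tfrac1{p'}\|u(t)\|_{L^{p'}}^{p'}$ respectively (weak lower semicontinuity), while their sum $E(\rho_k,u_k)(t)$ converges to $e(t)=\tfrac1p\|\rho(t)\|_{L^p}^p+\tfrac1{p'}\|u(t)\|_{L^{p'}}^{p'}$; the elementary observation that $a_k+b_k\to a+b$ with $\liminf_ka_k\ge a$, $\liminf_kb_k\ge b$ forces $a_k\to a$ and $b_k\to b$ then yields $\|\rho_k(t)\|_{L^p}\to\|\rho(t)\|_{L^p}$ and $\|u_k(t)\|_{L^{p'}}\to\|u(t)\|_{L^{p'}}$. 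Since $L^p$ and $L^{p'}$ are uniformly convex for $p,p'\in(1,\infty)$, weak convergence together with convergence of the norms gives strong convergence: $\rho_k(t)\to\rho(t)$ in $L^p$ and $u_k(t)\to u(t)$ in $L^{p'}$, for every $t$. It is worth noting that condition \eqref{eq:r-condition}, crucial for the perturbation step, plays no role in this claim.

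Finally I would harvest the two conclusions. Strong continuity: $(\rho,u)\in\X\subset C([0,T];L^1\times W^{1,\tilde p})$ plus the uniform bound in $L^p\times L^{p'}$ makes $t\mapsto(\rho(t),u(t))$ weakly continuous into $L^p\times L^{p'}$ (if $t_n\to t$ then $\rho(t_n)\to\rho(t)$ in $L^1$ while $(\rho(t_n))$ is bounded in the reflexive space $L^p$, so $\rho(t_n)\rightharpoonup\rho(t)$ in $L^p$, the weak limit of any subsequence being pinned down by the $L^1$-limit; likewise for $u$), hence $t\mapsto\|\rho(t)\|_{L^p}$ and $t\mapsto\|u(t)\|_{L^{p'}}$ are lower semicontinuous; as their weighted sum equals the continuous function $e$, each is also upper semicontinuous, hence continuous, and weak continuity plus norm continuity in a uniformly convex space gives $(\rho,u)\in C([0,T];L^p\times L^{p'})$. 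The equation: for $\phi\in C_c^\infty((0,T)\times\Td)$, using $\div u_k=0$ one has $-\int_0^T\!\!\int_{\Td}\rho_k\,\partial_t\phi\,dx\,dt-\int_0^T\!\!\int_{\Td}\rho_ku_k\cdot\nabla\phi\,dx\,dt=\int_0^T\!\!\int_{\Td}R_k\cdot\nabla\phi\,dx\,dt$; the linear term converges since $\rho_k\to\rho$ in $C([0,T];L^1)$, the right-hand side vanishes since $R_k\to0$ in $L^1$, and for the bilinear term the pointwise-in-time strong convergences give $\rho_k(t)u_k(t)\to\rho(t)u(t)$ in $L^1_x$ with $\|\rho_k(t)u_k(t)\|_{L^1_x}\le\|\rho_k(t)\|_{L^p}\|u_k(t)\|_{L^{p'}}\le p^{1/p}(p')^{1/p'}e(t)$ uniformly in $k$, so dominated convergence in $t$ yields $\rho_ku_k\to\rho u$ in $L^1([0,T]\times\Td)$ and hence convergence of $\int_0^T\!\!\int_{\Td}\rho_ku_k\cdot\nabla\phi\,dx\,dt$. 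Together with $\div u=0$ (inherited from $\div u_k=0$ and $u_k\to u$) this shows that $(\rho,u)$ solves \eqref{eq:transport} in the sense of distributions, completing the claim.
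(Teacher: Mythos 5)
Your proposal is correct and follows essentially the same route as the paper: establish $E(\rho,u)(t)=e(t)$ pointwise via weak lower semicontinuity, squeeze $E(\rho_k,u_k)(t)\to e(t)$, use a sum-splitting argument plus uniform convexity to upgrade to strong convergence of $\rho_k(t)$ and $u_k(t)$, then pass to the limit in the weak form of \eqref{eq:trans-def} by dominated convergence in $t$. The only cosmetic difference is in showing $R_k\to0$: you integrate \eqref{eq:sub-cond-trans} in time and apply Fatou, whereas the paper first gets $\|R_k(t)\|_{L^1}\to 0$ for each fixed $t$ (directly from $E(\rho_k,u_k)(t)\to e(t)$) and then invokes dominated convergence in $t$ alongside the other terms; since you have already established the pointwise-in-time convergence $E(\rho_k,u_k)(t)\to e(t)$, the paper's version is the more economical of the two, but both are valid.
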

	Since the points of continuity of a Baire-1-map form a residual set, Claims~\ref{clm:I-Baire-1}, \ref{clm:continuity-to-energy} and \ref{clm:energy-to-solution} indeed prove Theorem~\ref{thm:main}. \qed
	
	\subsubsection{Proof of the main result}
	
\begin{proof}[Proof of Claim~\ref{clm:I-Baire-1}]
	We will show that $I$ is upper semicontinuous. Since moreover it takes values in the bounded interval $[0,\sup_t e]$, by Proposition~11 in Chapter~IX, section~2.7 of~\cite{Bourbaki_Gen_Top2} this implies the Baire-1 property.
		
	Let us assume, towards a contradiction, that there is a sequence
	\begin{equation*}
	(\rho_n,u_n) \tox (\rho,v) \text{ and } \lim_{n\to\infty} I(\rho_n,u_n)> I(\rho,u). 
	\end{equation*}
	Then there exists a sequence of times $t_n\in [0,T]$ such that
	\begin{equation}\label{e:usc}
	\lim_{n\to\infty} i(\rho_n,u_n)(t_n) > I(\rho,u) 
	\end{equation}
	and without loss of generality $t_n\to t_0\in [0,T]$. 
	On the other hand, recall that $\X\subset C([0,T],L^1\times W^{1,\tilde p})$ with the uniform bound $E(\rho,u)(t)\leq \sup_{t\in [0,T]}e(t)$. In particular, using the Banach-Alaoglu theorem and after extracting a subsequence (not relabeled), we deduce that  
\[ 
\rho_n(t_n) \rightharpoonup \rho(t_0) \text{ in $L^p$ and } v_n(t_n) \rightharpoonup v(t_0)\textrm{ in }L^{p'} 
\]
as $n\to\infty$. Since the $L^p$ and $L^{p'}$ norms are lower semicontinuous, we deduce 
		$ \liminf_{n\to\infty} \|\rho_n(t_n)\|_{L^p} \ge \|\rho(t_0)\|_{L^p} $
		and $ \liminf_{n\to\infty} \|u_n(t_n)\|_{L^{p'}} \ge \|u(t_0)\|_{L^{p'}}$. By the definition of $i$ and $I$ this implies
\[ 
\limsup_{n\to\infty} i(\rho_n,v_n)(t_n) \le i(\rho,v)(t_0) \le I(\rho,v), 
\]
		contradicting \eqref{e:usc}. 
\end{proof}
	
\begin{proof}[Proof of Claim~\ref{clm:continuity-to-energy}]
This is the heart of the argument and follows easily from the following perturbation property.

\begin{proposition}[Quantitative perturbation property]\label{prop:lifting}
		There exists $\alpha<1$ with the following property.
		For any $(\rho,u)\in \X_0$ there exists a sequence $(\rho_k,u_k)\in \X_0$, $k=1,2,\dots$ with
	\begin{enumerate}[(i)]
				\item $(\rho_k,u_k) \tox (\rho,u)$
				\item $I(\rho_k,u_k) \leq \alpha I(\rho,u)$.
	\end{enumerate}
			
\end{proposition}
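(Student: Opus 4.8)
The plan is to reduce the proof of Proposition~\ref{prop:lifting} to a single perturbation step, which is the content of the convex integration construction in \cite{ModenaSattig2020}. Concretely, I would first isolate the \emph{one-step} statement: given a subsolution $(\rho,u)\in\X_0$ and given any $\delta>0$, there exists another subsolution $(\bar\rho,\bar u)\in\X_0$ with $d_\X\bigl((\rho,u),(\bar\rho,\bar u)\bigr)<\delta$ and with $I(\bar\rho,\bar u)\le \alpha\, I(\rho,u)$ for a universal constant $\alpha<1$. Once this one-step statement is available, the sequence in the Proposition is obtained by iterating it: choose $(\rho_1,u_1)$ with $d_\X$-distance $<1$ to $(\rho,u)$ and $I(\rho_1,u_1)\le\alpha I(\rho,u)$, then choose $(\rho_2,u_2)$ with $d_\X$-distance $<1/2$ to $(\rho,u)$ — here one must be a little careful: the one-step statement only controls distance to its own input, so to keep $(\rho_k,u_k)\tox(\rho,u)$ I would instead apply the one-step statement $k$ times \emph{starting from $(\rho,u)$ afresh each time} with a $\delta_k\to 0$, or build a Cauchy-type telescoping sequence. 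The cleanest route is: for each $k$, apply the one-step statement to $(\rho,u)$ with $\delta=1/k$, producing $(\rho_k,u_k)$ with $d_\X$-distance $<1/k$ and $I(\rho_k,u_k)\le\alpha I(\rho,u)$; this already gives (i) and a weaker form of (ii) with $\alpha$ in place of a decaying factor, which is exactly what the Proposition asks for (the $k$-indexed sequence need not have $I\to 0$, only $I\le\alpha I$).

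Next I would spell out what the building block of the one-step perturbation looks like, drawing on \cite{ModenaSattig2020}. The idea is to add to $\rho$ and $u$ highly oscillatory, spatially intermittent perturbations $\vartheta,w$ supported at a large frequency $\lambda$ and concentrated on a small spatial scale, designed so that the quadratic interaction $w\cdot\nabla\vartheta$ (together with the divergence-free constraint, handled via a corrector) cancels $\div R$ up to a new error $\div\bar R$ with $\|\bar R\|_{L^1}$ small — small enough that the new subsolution inequality \eqref{eq:sub-cond-trans} still holds after the energy gap $i$ has been reduced. The perturbations are chosen so that $\|\vartheta(t)\|_{L^p}^p$ and $\|w(t)\|_{L^{p'}}^{p'}$ are controlled and, crucially, so that adding them raises $E(\rho,u)(t)$ by an amount comparable to the \emph{pointwise} gap $i(\rho,u)(t)$; a suitable cutoff in time localizes the boost to where the gap is large, so that after the step $i(\bar\rho,\bar u)(t)\le (1-c)\,i(\rho,u)(t)$ pointwise, whence $I(\bar\rho,\bar u)\le\alpha I(\rho,u)$ with $\alpha=1-c<1$. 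The smallness of the $d_\X$-distance, i.e.~of $\|\vartheta\|_{L^\infty_t L^1_x}$ and $\|w\|_{L^\infty_t W^{1,\tilde p}_x}$, is bought by taking $\lambda$ large: the $L^1$ norm of $\vartheta$ is subcritical in $\lambda$ by intermittency, and the $W^{1,\tilde p}$ norm of $w$ is controlled precisely because the condition $\frac1p+\frac1{\tilde p}>1+\frac1d$ leaves room for the gradient to cost a positive power of $\lambda$ while the amplitude still decays — this is where \eqref{eq:r-condition} enters.

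The bookkeeping I would make explicit is: (a) the new triple $(\bar\rho,\bar u,\bar R)$ solves \eqref{eq:trans-def}; (b) the three-term sum in \eqref{eq:sub-cond-trans} for the new triple is still $<e(t)$ for all $t$ — here one uses that $\frac1p\|\bar\rho\|_{L^p}^p+\frac1{p'}\|\bar u\|_{L^{p'}}^{p'}=E(\rho,u)(t)+(\text{boost})(t)+(\text{error})(t)$ with the boost at most, say, $\tfrac12 i(\rho,u)(t)$ and the error and $M\|\bar R\|_{L^1}$ together at most another small fraction of $\min_t i(\rho,u)(t)>0$; (c) $i(\bar\rho,\bar u)(t)\le\alpha\, i(\rho,u)(t)$ with $\alpha<1$ independent of everything; (d) $d_\X\bigl((\rho,u),(\bar\rho,\bar u)\bigr)<\delta$. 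The constant $M$ from the definition of $\X_0$ is chosen large (this is the "Remark~\ref{rem:C-def}" referenced earlier) precisely so that the $M\|\bar R\|_{L^1}$ term is dominated; note $\min_t i(\rho,u)(t)>0$ is guaranteed by \eqref{eq:sub-cond-trans} and compactness of $[0,T]$, so there is a genuine strictly positive budget to spend.

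The main obstacle is step (c) combined with (d): one must reduce the energy gap by a \emph{definite} fraction while the perturbation is simultaneously as small as desired in the weak-type metric $d_\X$, and these pull in opposite directions (a large energy boost wants a large amplitude). The resolution — and the reason the intermittent construction of \cite{ModenaSattig2020} is needed rather than a soft argument — is that intermittency decouples the $L^p$/$L^{p'}$ norms (which carry the energy and are kept at order one) from the $L^1$ and $W^{1,\tilde p}$ norms (which must vanish): the concentration parameter can be tuned so that the former stay bounded below while the latter go to zero as $\lambda\to\infty$, exactly in the regime permitted by \eqref{eq:r-condition}. Verifying that the error term $\bar R$ — which contains the high-high interaction not cancelled by the principal part, a time-derivative term, and the Nash-type corrector term — has $L^1$ norm tending to zero is the technically heaviest point, and is precisely what is borrowed wholesale from the estimates in \cite{ModenaSattig2020}; I would cite those estimates and only indicate how the time cutoff and the choice of amplitude proportional to $i(\rho,u)(t)^{1/p}$ (resp.\ its $p'$-analog) deliver the pointwise gap reduction.
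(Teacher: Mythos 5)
Your high-level skeleton is right: iterate a single perturbation step, applying it afresh to $(\rho,u)$ with $\delta_k\to 0$, to get both $(\rho_k,u_k)\tox(\rho,u)$ and a fixed-fraction improvement $I(\rho_k,u_k)\le\alpha I(\rho,u)$. However, there is a concrete gap in how you propose to get the fixed-fraction improvement, and it is precisely the gap that the paper's argument is built to close.

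The single-step construction available (Proposition~\ref{p:MainProp}, taken from \cite{ModenaSattig2020}) does \emph{not} let you prescribe the energy pump: the perturbation amplitude is dictated by the defect $R_0$, and the energy gain is pinned to $\|R_0(t)\|_{L^1}$ from both sides, via \eqref{eq:inductive-energy} (upper bound $M\|R_0\|_{L^1}$) and \eqref{eq:inductive-R} (lower bound $\|R_0\|_{L^1}-\delta$). Your suggestion to ``choose the amplitude proportional to $i(\rho,u)(t)^{1/p}$'' is not something you can simply graft onto the cited estimates: a perturbation with amplitude larger than $|R_0|^{1/p}$ would overshoot the cancellation of $\div R_0$, and the excess quadratic interaction would have to be absorbed into the new defect $R_1$, destroying the smallness $\|R_1\|_{L^1}\le\delta$. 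Concretely, if $(\rho,u,R)$ is a subsolution with $\|R(t)\|_{L^1}$ very small (possibly zero at some $t$), a single application of Proposition~\ref{p:MainProp} adds essentially no energy, and $i(\rho,u)(t)$ cannot be reduced by a factor $\alpha<1$ uniformly in $t$. So the one-step statement you isolate in the first paragraph does not follow from the available convex integration proposition.

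What the paper does instead is a two-step trick: apply Proposition~\ref{p:MainProp} once to $(\rho,u,R)$, obtaining $(\tilde\rho,\tilde u,\tilde R)$ with $\|\tilde R\|_{L^1}\le\delta$; then \emph{artificially inflate the defect} by setting $\bar R=\tilde R+\frac{1}{|\Td|}\chi(t)v$ for a fixed unit vector $v$ and a smooth scalar $\chi(t)\approx\frac{1}{M}i(t)-\|R(t)\|_{L^1}$; finally apply Proposition~\ref{p:MainProp} again to $(\tilde\rho,\tilde u,\bar R)$. The mean term $\chi(t)v$ is a valid (constant-in-$x$) Reynolds stress, so $(\tilde\rho,\tilde u,\bar R)$ is still a solution of \eqref{eq:trans-def}, and $\|\bar R(t)\|_{L^1}\approx\chi(t)$ is now exactly the amount of energy one wishes to inject. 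This yields the uniform-in-$t$ drop $i(\rho_1,u_1)(t)\le(1-\frac{1}{2M})i(\rho,u)(t)$, whence $\alpha=1-\frac{1}{2M}$. Your proposal, incidentally, would work as written in the Navier--Stokes setting of Section~\ref{sec:ns}, where the construction of \cite{BurczakModenaSzekelyhidi20} has the prescribable energy pump $\gamma_0(t)$ built in (Proposition~\ref{prop:NS}); but for the transport equation that knob is missing from Proposition~\ref{p:MainProp}, and the artificial-defect device is the mechanism that supplies it.
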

The proof of Proposition \ref{prop:lifting} will be given in the next sections. For the moment let us show how the claim follows.		
		
Assume towards a contradiction that $(\rho,u)\in \X$ is a point of continuity of $I$ and $I(\rho,u)\ne0$. Since $I\geq 0$ on $\X$, we may thus assume $I(\rho,u)>0$. Let us also fix $\beta\in(\alpha,1)$.
		
By the continuity assumption and density of $\X_0\subset\X$ we can choose a sequence of smooth subsolutions $(\rho_n,u_n)\in \X_0$ with $(\rho_n,u_n)\tox (\rho,u)$ and $I(\rho_n,u_n)\to I(\rho,u)$. By renumbering the sequence, if necessary, we may assume without loss of generality that $I(\rho_n,u_n)\leq \frac{\beta}{\alpha} I(\rho,u)$. We apply Proposition \ref{prop:lifting} to each $(\rho_n,u_n)$ to form sequences $(\rho_{n,k},u_{n,k})\in \X_0$ with $(\rho_{n,k},u_{n,k}) \tox (\rho_n,v_n)$ as $k\to\infty$ and 
\begin{equation*}
    I(\rho_{n,k},u_{n,k})\leq \alpha I(\rho_n,u_n)\leq \beta I(\rho,u).
\end{equation*}
On the other hand the diagonal sequence satisfies $(\rho_{n,n},u_{n,n})\tox (\rho,u)$. Since $\beta<1$ we obtain a contradiction to the assumption that $I$ is continuous as $(\rho,u)\in \X$.
\end{proof}
	
\begin{proof}[Proof of Claim~\ref{clm:energy-to-solution}]
Let $(\rho,u)\in\X$ with $I(\rho,u)=0$, and let $(\rho_k,u_k)\in\X_0$ with $(\rho_k,u_k)\tox(\rho,u)$ as $k\to\infty$.
By definition $E(\rho_k,u_k)(t)<e(t)=E(\rho,u)(t)$ for all $t\in [0,T]$.
On the other hand recall from Remark \ref{remark:bddX} that $\rho_k(t) \rightharpoonup \rho(t)$ in $L^p$ and $u_k(t) \rightharpoonup u(t)$ in $L^{p'}$ for every~$t$. Therefore, by convexity of the norm
\begin{equation*}
    E(\rho,u)(t)\leq \liminf_{k\to\infty}E(\rho_k,u_k)(t)\leq \limsup_{k\to\infty}E(\rho_k,u_k)(t)\leq E(\rho,u)(t),
\end{equation*}
consequently $E(\rho_k,u_k)(t)\to E(\rho,u)(t)$ for all $t$. Since for $p,p'\in (1,\infty)$ the norms are uniformly convex, we deduce $\rho_k(t)\to \rho(t)$ strongly in $L^p$ and $u_k(t) \to u(t)$ strongly in $L^{p'}$.

Concerning equation \eqref{eq:trans-def} we first observe that, using H\"older's inequality,
the product $(\rho_ku_k)(t)$ converges strongly in $L^1$ to $(\rho u)(t)$. Furthermore, for every $k$ there exists $R_k$ such that the triple $(\rho_k,u_k,R_k)$ solves \eqref{eq:trans-def} and \eqref{eq:sub-cond-trans} is satisfied. But then 
\begin{equation*} 
\limsup_{k} M\|R_k(t)\|_{L^1} \leq e(t)- \lim_k E(\rho_k,u_k)(t)=0, 
\end{equation*}
so that $R_k(t)\to 0$ strongly in $L^1$. 
Finally note that, as functions of time, $\rho_k(t)$, $(\rho_ku_k)(t)$ and $R_k(t)$ converge pointwise (in $t$) and are uniformly bounded, hence by dominated convergence	
\begin{equation*} 
\|\rho_k-\rho\|_{L^1_t(L^p_x)}, \|\rho_ku_k-\rho u\|_{L^1_t(L^1_x)}, \|R_k\|_{L^1_t(L^1_x)} \to 0. 
\end{equation*}
Consequently $(\rho,u)$ is a solution of the transport equation in the sense of distributions.
		
		The strong continuity in time of the solution can be seen by a very similar argument, using the smoothness of $e(t)$ and the lower semicontinuity of both $t\mapsto \|\rho(t)\|_{L^p}$ and $t\mapsto \|v(t)\|_{L^p}$.
	\end{proof}

	\subsection{The Main Proposition}\label{sec:mainprop}
	The key statement on which the whole approach rests, is a perturbation property as it has been introduced in \cite{kirchheim03}. In the context of the transport equation such statements appeared in \cite{MR4029736,MR3884855} and \cite{ModenaSattig2020}. Indeed, let us recall the main proposition, Proposition 2.1 from \cite{ModenaSattig2020}, restated here in slightly modified form to suit our purposes. 

	\begin{proposition}\label{p:MainProp}
		There is a constant $M\geq 1$ such that the following holds. Let $p\in [1,\infty)$ and $\tilde p\in [1,\infty)$ so that 
		\begin{equation*}
		    \frac{1}{p}+\frac{1}{\tilde p}>1+\frac{1}{d}.
		\end{equation*}
		Then for any $\delta>0$ and any smooth solution $(\rho_0,u_0,R_0)$ of the continuity-defect equation \eqref{eq:trans-def} there is another smooth solution $(\rho_1,u_1,R_1)$ with estimates
		\begin{subequations}
		\begin{align}
		\frac{1}{p}\|(\rho_1-\rho_0)(t)\|_{L^p}^p + \frac{1}{p'} \|(u_1-u_0)(t)\|_{L^{p'}}^{p'} &\leq M\|R_0(t)\|_{L^1}\,,
		\label{eq:inductive-energy} \\
		\|(u_1-u_0)(t)\|_{W^{1,\tilde{p}}}+\|R_1(t)\|_{L^1} &\leq\delta\,, \label{eq:inductive-W1p} \\
		\|(\rho_1-\rho)(t)\|_{L^1}+\|(u_1-u_0)(t)\|_{L^1} &\leq \delta\,, \label{eq:inductive-L1}\\
		\|(\rho_1-\rho_0)(u_1-u_0)(t)\|_{L^1} &\geq  \|R_0(t)\|_{L^1} - \delta \label{eq:inductive-R}
		\end{align}
		\end{subequations}
		for all $t\in[0,T]$.
		\label{prop:main}
	\end{proposition}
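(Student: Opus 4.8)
\emph{Strategy.} The plan is to deduce Proposition~\ref{p:MainProp} from the intermittent convex integration scheme of \cite{ModenaSattig2020}, of which it is essentially Proposition~2.1; what follows is an outline of that construction together with the bookkeeping needed for the precise form of \eqref{eq:inductive-energy}--\eqref{eq:inductive-R}. One perturbs $(\rho_0,u_0)$ by highly oscillatory, highly concentrated (``intermittent'') Mikado-type flows. Fix a finite symmetric set $\Lambda\subset\mathbb{S}^{d-1}$ spanning $\R^d$, e.g.\ $\Lambda=\{\pm e_1,\dots,\pm e_d\}$. For a large frequency $\lambda$ and a concentration parameter $\mu=\mu(\lambda)$, \cite{ModenaSattig2020} provides, for each $\xi\in\Lambda$, a scalar profile $\Theta_\xi$ and a divergence-free vector profile $W_\xi$ on $\Td$ with: $\fint_{\Td}\Theta_\xi W_\xi=\xi$; $\Theta_\xi W_\xi$ a nonnegative multiple of $\xi$ of unit average; equal supports, disjoint across distinct $\xi$, so that all cross products vanish; the normalisations $\|\Theta_\xi\|_{L^p}\lesssim 1$ and $\|W_\xi\|_{L^{p'}}\lesssim 1$; the smallness $\|\Theta_\xi\|_{L^1}+\|W_\xi\|_{L^1}\to 0$ (using $p,p'<\infty$ and that the profiles concentrate); and, crucially, a $W^{1,\tilde p}$-estimate for $W_\xi$ which, even after the rescaling by $\lambda$, tends to $0$ as $\lambda\to\infty$. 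This last estimate is where the hypothesis $\tfrac1p+\tfrac1{\tilde p}>1+\tfrac1d$ enters --- it is exactly the condition under which a building block concentrated in all $d$ directions can simultaneously be $L^{p'}$-normalised and have a $W^{1,\tilde p}$-norm that beats the powers of $\lambda$ and $\mu$ lost upon differentiation (note also $\tfrac1p+\tfrac1{\tilde p}>1$ already forces $\tilde p<p'$, so concentration does decrease $L^{\tilde p}$-norms).

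\emph{The perturbation.} Let $\Phi(t,\cdot)$ be the flow of $u_0$; since $\div u_0=0$ it is volume preserving, which will be essential. After smoothly truncating $R_0$ below a threshold $\e$ depending on $\delta$ and $\sup_t\|R_0(t)\|_{L^1}$ --- an operation contributing $\lesssim\delta$ to the new defect --- a geometric lemma yields smooth $a_\xi\ge 0$ with $\sum_{\xi\in\Lambda}a_\xi\,\xi=R_0$ and $\sum_\xi a_\xi\lesssim|R_0|$ pointwise. Splitting $a_\xi=b_\xi c_\xi$ with $b_\xi^p=c_\xi^{p'}=a_\xi$ (so that $\tfrac1p b_\xi^p+\tfrac1{p'}c_\xi^{p'}=a_\xi$), I would set
\[
\rho_1:=\rho_0+\sum_{\xi\in\Lambda}b_\xi\,\Theta_\xi\circ(\lambda\Phi),\qquad
u_1:=u_0+\sum_{\xi\in\Lambda}c_\xi\,W_\xi\circ(\lambda\Phi)+w_c,
\]
where $w_c$ collects the lower-order divergence-free correctors (as in \cite{ModenaSattig2020}) needed to keep $\div u_1=0$; composition with the measure-preserving $\Phi$ preserves both the average $\fint\Theta_\xi W_\xi=\xi$ and the disjointness of supports. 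Writing $\vartheta:=\rho_1-\rho_0$, $w:=u_1-u_0$ and inserting into \eqref{eq:trans-def}, one uses $w\cdot\nabla\vartheta=\div(\vartheta w)$ and $\sum_\xi a_\xi\xi=R_0$ to make the low-frequency part of $\div(\vartheta w)$ cancel $-\div R_0$; every remaining term --- the high-frequency oscillation error $\div(\vartheta w-R_0)$, the transport term $\div(\rho_0 w)$, the material derivative $(\partial_t+u_0\cdot\nabla)\vartheta$ (which, thanks to composition with the flow, acts only on the slowly varying amplitudes $b_\xi$ rather than on the $O(\lambda)$ profile) and the corrector terms --- is the divergence of something small after applying the antidivergence operator $\adiv$, which gains a factor $\lambda^{-1}$. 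This defines $R_1$.

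\emph{Estimates and the main obstacle.} Estimate \eqref{eq:inductive-energy} then follows from disjointness of supports, the $L^p/L^{p'}$-normalisations and the choice $\tfrac1p b_\xi^p+\tfrac1{p'}c_\xi^{p'}=a_\xi\lesssim|R_0|$, giving $\tfrac1p\|\vartheta(t)\|_{L^p}^p+\tfrac1{p'}\|w(t)\|_{L^{p'}}^{p'}\lesssim\|R_0(t)\|_{L^1}$ with a constant independent of $\lambda$; this fixes $M$ once and for all. With $M$ fixed I would then choose $\lambda$ (hence $\mu=\mu(\lambda)$) large depending on $\delta$, $\e$ and on finitely many derivatives of $(\rho_0,u_0,R_0)$ and of $\Phi$: the $W^{1,\tilde p}$-part of \eqref{eq:inductive-W1p} comes from the decay of the rescaled $W^{1,\tilde p}$-estimate for $W_\xi$, the $\|R_1(t)\|_{L^1}$-part from the $\lambda^{-1}$-gains in every error term, \eqref{eq:inductive-L1} from $\|\Theta_\xi\|_{L^1}+\|W_\xi\|_{L^1}\to 0$, and the lower bound \eqref{eq:inductive-R} from
\[
\|(\vartheta w)(t)\|_{L^1}\ \ge\ \sum_{\xi\in\Lambda}\int_{\Td}a_\xi(t,x)\,dx\ -\ (\text{oscillation error})\ \ge\ \|R_0(t)\|_{L^1}-\delta,
\]
where one uses that $\Theta_\xi W_\xi$ is a nonnegative multiple of $\xi$ of unit average and that the $a_\xi$ have disjoint $x$-supports. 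The one genuinely delicate point --- the main obstacle --- is making $\|w(t)\|_{W^{1,\tilde p}}$ and $R_1$ small simultaneously: differentiating an oscillatory building block costs powers of $\lambda$ and of the concentration $\mu$, whereas $\adiv$ applied to the errors only returns $\lambda^{-1}$, so $\mu(\lambda)$ must be tuned so that the gain from concentration in $\|W_\xi\|_{W^{1,\tilde p}}$ overcomes the lost powers of $\lambda$; the strict inequality $\tfrac1p+\tfrac1{\tilde p}>1+\tfrac1d$ --- i.e.\ full $d$-dimensional, ``intermittent'' concentration --- is exactly what makes this feasible. Beyond this, the argument proceeds as in \cite{ModenaSattig2020}.
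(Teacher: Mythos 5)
The overall strategy (perturb with intermittent Mikado flows, exploit disjoint supports and the $L^p$/$L^{p'}$ normalizations, cancel $R_0$ against the low-frequency part of $\vartheta w$, and gain a power of $\lambda$ from the antidivergence) correctly reflects the spirit of \cite{ModenaSattig2020}, and your bookkeeping for \eqref{eq:inductive-energy}, \eqref{eq:inductive-L1} and \eqref{eq:inductive-R} tracks closely with what the paper does. However, your description of the construction deviates from the one actually used, and the deviation introduces a genuine gap.

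You define the perturbations by \emph{Lagrangian composition} with the flow $\Phi$ of $u_0$, writing $\rho_1-\rho_0 = \sum_\xi b_\xi\,\Theta_\xi(\lambda\Phi)$, $u_1-u_0 = \sum_\xi c_\xi\,W_\xi(\lambda\Phi)+w_c$, and claim that the material derivative then falls only on the slowly varying amplitudes and that $w_c$ is a ``lower-order'' corrector as in \cite{ModenaSattig2020}. This is not the mechanism in \cite{ModenaSattig2020} (nor in the present paper's proof sketch). There the Mikado profiles are \emph{translating waves}, $F_\mu^j(\lambda(x-\omega t e_j))$ and $\psi^j(\nu x)$, with $\omega=\lambda^\beta$ a large \emph{coordinate-aligned} speed, not the flow of $u_0$; the material derivative is handled through this translation together with extra scalar correctors $q,q_c$ and the antidivergence, and the velocity building blocks remain divergence-free in $x$ with no extra corrector of the type your scheme would need. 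The concrete problem with your version is the divergence constraint: while composition with the volume-preserving $\Phi$ does preserve integrals and disjointness of supports, it does \emph{not} preserve $\div W_\xi=0$. Indeed
\[
\div\bigl[W_\xi(\lambda\Phi)\bigr] \;=\; \lambda\,\partial_k\Phi_l\,(\partial_l W_\xi^k)(\lambda\Phi)
\;=\;\lambda\,(\partial_k\Phi_l-\delta_{kl})(\partial_l W_\xi^k)(\lambda\Phi),
\]
which for $t>0$ is of size $\lambda\,\|\nabla\Phi-\mathrm{Id}\|_\infty\,\|\nabla W_\xi\|$, i.e.\ carries the full factor $\lambda$ as well as all the intermittency powers of $\mu$ present in $\nabla W_\xi$. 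The corrector needed to repair this is therefore not of the ``lower-order'' type used in \cite{ModenaSattig2020}; it would have to be of the same size as the principal perturbation, and the $W^{1,\tilde p}$ and $L^{p'}$ budgets, which are already tight, would then no longer close. (The standard remedy in the Euler literature is to use the volume-preserving pushforward $(\nabla\Phi)^{-1}W_\xi(\lambda\Phi^{-1})$, which restores divergence-freeness but in turn destroys the clean pointwise identity $\Theta_\xi W_\xi\propto\xi$ and the averaging argument you rely on for \eqref{eq:inductive-R}; the paper avoids the issue entirely by not composing with $\Phi$ at all.) So as written, the step ``$\div u_1=0$ up to lower-order correctors'' is a gap, and the construction you describe is not the one in \cite{ModenaSattig2020}; the rest of the estimates are fine contingent on using the translated-Mikado building blocks.
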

	\begin{remark}\label{rem:proof}
	The proposition as stated here differs slightly from Proposition 2.1 in \cite{ModenaSattig2020}. Therefore we provide a proof sketch, still heavily based on \cite{ModenaSattig2020}.
	\end{remark}
	
	\begin{remark}\label{rem:C-def}
		The constant in the functional setup \eqref{eq:sub-cond-trans} is fixed as $M\geq 1$ in the statement of the proposition above.
	\end{remark}
	
	\begin{proof}[Proof sketch of Proposition \ref{p:MainProp}]
	We follow closely the proof of \cite[Proposition 2.1]{ModenaSattig2020} with the choice $\eta=1$ therein.
	As a start, observe that \eqref{eq:inductive-energy}-\eqref{eq:inductive-W1p} follow directly from the statement of Proposition 2.1, estimates (2.3a)-(2.3d), in \cite{ModenaSattig2020}. Therefore, in the following we focus on the additional estimates \eqref{eq:inductive-L1}-\eqref{eq:inductive-R}.
	
	 Recall from \cite[Section 4.2]{ModenaSattig2020} the definition
	 \begin{align*}
	     \rho_1(t,x)&=\rho_0(t,x)+\vartheta(t,x)+\vartheta_c(t)+q(t,x)+q_c(t)\\
	     u_1(t,x)&=u_0(t,x)+w(t,x)+w_c(t,x),
	 \end{align*}
	 where the principal perturbations are given in terms of \emph{Mikado densities} $\Theta^j$ and \emph{Mikado fields} $W^j$ as 
	 \begin{equation*}
	     \vartheta(t,x)=\sum_ja_j(t,x)\Theta^j(t,x)\,\quad 
	     w(t,x)=\sum_jb_j(t,x)W^{j}(t,x)
	 \end{equation*}
	 and $\vartheta_c,w_c$ as well as $q,q_c$ are corrector terms. These all depend on large oscillation and concentration parameters $\lambda\gg 1$, $\mu=\lambda^{\alpha}$, $\omega=\lambda^\beta$, $\nu=\lambda^{\gamma}$, see \cite[Section 6]{ModenaSattig2020}. The corrector terms are estimated in \cite[Lemma 4.6, Lemma 4.7, Lemma 4.11]{ModenaSattig2020}. With an appropriate choice of parameters in \cite[Section 6.1]{ModenaSattig2020} these estimates are shown to imply
	 \begin{equation}\label{eq:correctors}
	    \sup_t\left(\|q(t)\|_{L^p}+\|w_c(t)\|_{L^{p'}}+|\vartheta_c(t)|+|q_c(t)|\right)=o(1)\textrm{ as }\lambda\to\infty. 
	 \end{equation}
	 Concerning the principal perturbations, the $L^1$ bounds in \eqref{eq:inductive-L1} in the form 
	 \begin{equation*}
	     \sup_t\left(\|(u_1-u_0)(t)\|_{L^1}+\|(\rho_1-\rho)(t)\|_{L^1}\right)=o(1)\textrm{ as }\lambda\to\infty
	 \end{equation*}
	 follow immediately from \cite[(4.13b) and (4.16b)]{ModenaSattig2020}. 
	 
	 We now turn to \eqref{eq:inductive-R}. Using \eqref{eq:correctors} it suffices to estimate $\|\vartheta w\|_{L^1}-\|R_0\|_{L^1}$. Recall from \cite[Section 4.2]{ModenaSattig2020} that
	 \begin{align*}
	     \vartheta(t,x) w(t,x)&=\sum_{j=1}^da_j(t,x)b_j(t,x)\Theta^j(t,x)W^j(t,x)\\
	     &=\sum_{j=1}^d\chi_j^2(t,x)R_0^j(t,x)F_{\mu}^j(\lambda(x-\omega te_j))[\psi^j(\nu x)]^2e_j,
	 \end{align*}
	 where $\chi_j$ are smooth cut-off functions with 
	 \begin{equation}\label{eq:cutoff}
	     \chi_j(t,x)=\begin{cases}0&\textrm{ if }|R_0^j(t,x)|\leq \varepsilon,\\ 1&\textrm{ if }|R_0^j(t,x)|\geq 2\varepsilon\end{cases}
	 \end{equation}
	 for some $\varepsilon=C\delta$ (set to be $\tfrac{1}{4d}\delta$ in \cite[Section 4.2]{ModenaSattig2020}). Moreover
	 \begin{equation*}
	     F_{\mu}^j(x)=\varphi^j_{\mu}(x)\tilde\varphi^j_{\mu}(x),
	 \end{equation*}
	 and $\{e_j:\,j=1,\dots,d\}$ is the standard orthonormal basis of $\R^d$ so that $R_0=\sum_{j=1}^dR_0^je_j$.
	 The periodic functions $\varphi^j_{\mu}, \tilde\varphi^j_{\mu}$ and $\psi^j$ are defined in \cite[Lemma 4.3 and (4.8)]{ModenaSattig2020}, and in particular it follows that $F_\mu\geq 0$,
	 $F^j_\mu(\lambda(\cdot-\omega te_j))$ and $F^{j'}_\mu(\lambda(\cdot-\omega te_{j'}))$ have disjoint (spatial) support for every $t$ for $j\neq j'$, and
	 \begin{equation}\label{eq:averages}
	 \int F_\mu(x)\,dx=\int [\psi^j(x)]^2\,dx=1.
	 \end{equation}
	 Let us write $w=\sum_jw_je_j$. Then $\vartheta w=\sum_j\vartheta w_j e_j$ with $\textrm{supp }\vartheta w_j\cap \textrm{supp }\vartheta w_{j'}=\emptyset$. Consequently $|\vartheta w|=\sum_j|\vartheta w_j|$. Moreover
	 \begin{align*}
	     |\vartheta(t,x) w_j(t,x)|=&\chi_j^2(t,x)|R_0^j(t,x)|F_{\mu}(\lambda(x-\omega te_j))[\psi^j(\nu x)]^2\\
	     =&|R_0^j(t,x)|+(\chi_j^2(t,x)-1)|R^j_0(t,x)|+\\
	     &+\chi^2_j(t,x)|R_0^j(t,x)|(F_\mu(\lambda(x-\omega te_j)-1)+\\
	     &+\chi^2_j(t,x)|R_0^j(t,x)|F_\mu(\lambda(x-\omega te_j)([\psi^j(\nu x)]^2-1)\\
	     =&|R_0^j(t,x)|+(I)+(II)+(III).
	 \end{align*}
	It follows from \eqref{eq:cutoff} that $(I)=O(\varepsilon)$ uniformly in space. As for (II) and (III), because of \eqref{eq:averages} we may apply \cite[Lemma 2.6]{ModenaSzekelyhidi18} to obtain
	\begin{align*}
	    \int_{\Td} (II)&\leq \frac{C}{\lambda}\|\chi_j^2|R_0^j|\|_{C^1}=O(\lambda^{-1})\\
	    \int_{\Td} (III)&\leq \frac{C}{\nu}\|\chi_j^2|R_0^j|F_{\mu}(\lambda\cdot)\|_{C^1}=O\left(\frac{\lambda\mu}{\nu}\right).
	\end{align*}
	Again using the choice of parameters in \cite[Section 6.1]{ModenaSattig2020}, and in particular (6.1b) therein, we deduce
	\begin{equation*}
	    \int_{\Td} |\vartheta w_j|= \int_{\Td} |R^j_0|+O(\varepsilon)+o(1)\quad\textrm{ as }\lambda\to\infty.
	\end{equation*}
	By choosing appropriately $\varepsilon=O(\delta)$ and $\lambda\gg 1$ we deduce
	\begin{equation*}
	    \|\vartheta w\|_{L^1} =\sum_{j=1}^d\int_{\Td} |\vartheta w^j|\geq \sum_{j=1}^d\int_{\Td} |R_0^j|\,dx -\delta \geq \|R_0\|_{L^1}-\delta
	\end{equation*}
	from which \eqref{eq:inductive-R} follows.
	\end{proof}

\begin{corollary}\label{c:additional}
In the setting of Proposition \ref{p:MainProp} we can ensure, in addition to \eqref{eq:inductive-energy}-\eqref{eq:inductive-R}, the estimate
\begin{equation}\label{eq:inductive-E}
\begin{split}
\Bigl|\|\rho_1(t)\|_{L^p}^p-\|\rho_0(t)\|_{L^p}^p-\|(\rho_1-\rho_0)(t)\|_{L^p}^p\Bigr|\leq \delta\\
\Bigl|\|u_1(t)\|_{L^{p'}}^{p'}-\|u_0(t)\|_{L^{p'}}^{p'}-\|(u_1-u_0)(t)\|_{L^{p'}}^{p'}\Bigr|\leq \delta\\
\end{split}
\end{equation}
for any $t\in [0,T]$.
\end{corollary}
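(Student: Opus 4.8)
The plan is to read off \eqref{eq:inductive-E} from an elementary pointwise inequality together with the $L^1$-smallness and $L^p$-boundedness of the perturbation that are already part of Proposition~\ref{p:MainProp} and its proof. Set $\sigma\coloneqq\rho_1-\rho_0$. I only treat the density; the velocity is handled identically, replacing $p$ by $p'$ and $\rho_0,\rho_1$ by $u_0,u_1$, and using $\sup_t\|(u_1-u_0)(t)\|_{L^1}=o(1)$ as $\lambda\to\infty$ (cf.\ the proof sketch above) together with $\|(u_1-u_0)(t)\|_{L^{p'}}^{p'}\leq p'M\|R_0(t)\|_{L^1}$ from \eqref{eq:inductive-energy}.

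First I would record the elementary inequality: for every $p\in(1,\infty)$ there is $C_p<\infty$ with
\begin{equation*}
\Bigl|\,|a+b|^p-|a|^p-|b|^p\,\Bigr|\leq C_p\bigl(|a|^{p-1}|b|+|a|\,|b|^{p-1}\bigr)\qquad\text{for all }a,b\in\R,
\end{equation*}
which follows from the mean value theorem: if, say, $|a|\leq|b|$ then $\bigl|\,|a+b|^p-|b|^p\,\bigr|\leq p(|a|+|b|)^{p-1}|a|\leq C_p|a|\,|b|^{p-1}$, and one adds $|a|^p\leq|a|\,|b|^{p-1}$; the case $|b|\leq|a|$ is symmetric. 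Applying this with $a=\rho_0(t,x)$, $b=\sigma(t,x)$ and integrating over $\Td$ gives, for every $t\in[0,T]$,
\begin{equation*}
\Bigl|\,\|\rho_1(t)\|_{L^p}^p-\|\rho_0(t)\|_{L^p}^p-\|\sigma(t)\|_{L^p}^p\,\Bigr|\leq C_p\Bigl(\|\rho_0(t)\|_{L^\infty}^{p-1}\|\sigma(t)\|_{L^1}+\int_{\Td}|\rho_0(t)|\,|\sigma(t)|^{p-1}\,dx\Bigr).
\end{equation*}
It then suffices to show the right-hand side is $o(1)$ as $\lambda\to\infty$, uniformly in $t$: choosing $\lambda$ large enough (which one is free to do, exactly as in the proof of Proposition~\ref{p:MainProp}) makes it $\leq\delta$, which is \eqref{eq:inductive-E}.

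For the first term, $\sup_t\|\rho_0(t)\|_{L^\infty}<\infty$ since $\rho_0$ is smooth on $[0,T]\times\Td$, while $\sup_t\|\sigma(t)\|_{L^1}=o(1)$ by the bound on the principal and corrector perturbations recorded in the proof of Proposition~\ref{p:MainProp}. For the second term I would fix an exponent $s\in(1,\infty)$ with $1\leq(p-1)s<p$ — such $s$ exists for every $p>1$, since $[\tfrac1{p-1},\tfrac{p}{p-1})$ always meets $(1,\infty)$ — and write, by Hölder with conjugate exponents $s',s$,
\begin{equation*}
\int_{\Td}|\rho_0(t)|\,|\sigma(t)|^{p-1}\,dx\leq\|\rho_0(t)\|_{L^{s'}}\,\|\sigma(t)\|_{L^{(p-1)s}}^{p-1}.
\end{equation*}
Here $\|\rho_0(t)\|_{L^{s'}}\leq\|\rho_0(t)\|_{L^\infty}$ is bounded uniformly in $t$ and $\lambda$; and interpolating between $L^1$ and $L^p$, using $\sup_t\|\sigma(t)\|_{L^p}^p\leq pM\sup_t\|R_0(t)\|_{L^1}<\infty$ (from \eqref{eq:inductive-energy}) and $\sup_t\|\sigma(t)\|_{L^1}=o(1)$, one gets $\sup_t\|\sigma(t)\|_{L^{(p-1)s}}=o(1)$ as $\lambda\to\infty$ (for $(p-1)s=1$ this is the $L^1$ bound itself). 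Hence both terms are $o(1)$, finishing the proof.

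The only delicate point is this last term in the range $1<p<2$: the direct application of Hölder with exponents $p,p'$ gives merely $\int|\rho_0|\,|\sigma|^{p-1}\leq\|\rho_0\|_{L^p}\|\sigma\|_{L^p}^{p-1}$, which is bounded but not $o(1)$, so one must surrender a little integrability of $\sigma$ — harmless since $\rho_0$ is smooth, hence lies in every $L^{s'}$ — in order to cash in on $\|\sigma\|_{L^1}\to0$ through interpolation. Apart from this, the corollary uses nothing beyond the estimates already proved for Proposition~\ref{p:MainProp}; in particular no further structural property of the Mikado building blocks $\Theta^j$, $W^j$ enters.
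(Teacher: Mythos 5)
Your proof is correct and rests on the same ingredients as the paper's: the pointwise inequality $\bigl||a+b|^p-|a|^p-|b|^p\bigr|\le C_p(|a|^{p-1}|b|+|a||b|^{p-1})$, followed by H\"older and the $L^1$-smallness plus $L^p$-boundedness of the perturbation from \eqref{eq:inductive-L1} and \eqref{eq:inductive-energy}. The only difference is cosmetic: the paper splits into the cases $p<2$ and $p\ge 2$ (using $\|f\|_{L^{1/(2-p)}}\|g\|_{L^1}^{p-1}$, resp.~$\|f\|_{L^\infty}\|g\|_{L^{p-1}}^{p-1}$ with interpolation), whereas you unify both by choosing an auxiliary H\"older exponent $s$ with $1\le(p-1)s<p$; both exploit that $\rho_0$ is smooth and so in every $L^{s'}$.
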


The proof of the corollary is based on
the following lemma, which can be seen as a quantitative version of a famous convergence result of Brezis and Lieb \cite{BrezisLieb}, see also \cite[Theorem 1.9]{LiebLoss}.
		\begin{lemma}
			Let $p\in[1,2)$, then there is a constant $C_p<\infty$ such that
			\begin{equation}
			\left| \|f+g\|_{L^p}^p - \|f\|_{L^p}^p - \|g\|_{L^p}^p \right| \le C_p \left( \|f\|_{L^{\frac{1}{2-p}}} \|g\|_{L^1}^{p-1} + \|f\|_{L^\infty}^{p-1} \|g\|_{L^1} \right)
			\label{eq:Lp-sum-decoupling-low}
			\end{equation}
			for every (scalar or vectorial) $f,g$. 
			If $p\in[2,\infty)$, then
			\begin{equation}
			\left| \|f+g\|^p_{L^p}-\|f\|^p_{L^p}-\|g\|^p_{L^p} \right| \le C_p \left( \|f\|_{L^\infty} \|g\|_{L^{p-1}}^{p-1} + \|f\|_{L^\infty}^{p-1} \|g\|_{L^1} \right)
			\label{eq:Lp-sum-decoupling-high}
			\end{equation}
		\end{lemma}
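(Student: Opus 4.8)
The plan is to prove the two pointwise inequalities \eqref{eq:Lp-sum-decoupling-low} and \eqref{eq:Lp-sum-decoupling-high} by first establishing a scalar inequality for real numbers $a,b$ and then integrating. The key observation is that for the function $t\mapsto |t|^p$ the ``cross term'' $\bigl||a+b|^p - |a|^p - |b|^p\bigr|$ can be controlled in a way that decouples a factor depending only on $|a|$ (to be paired with $f$) from a factor that is at most linear in $|b|$, or homogeneous of degree $p$ in $|b|$, as appropriate. Concretely, I would show that there is $C_p$ such that for all $a,b\in\R$ (or vectors, applying the scalar bound to $|a|,|b|$ with the triangle inequality),
\[
\bigl| |a+b|^p - |a|^p - |b|^p \bigr| \le C_p\Bigl( |a|\,|b|^{p-1} + |a|^{p-1}|b|\Bigr),
\]
valid for every $p\in(1,\infty)$. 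This is elementary: away from $b=0$ write the left side as $|b|^p\bigl||s+1|^p - |s|^p - 1\bigr|$ with $s=a/b$ and check the bound $|s+1|^p-|s|^p-1 \le C_p(|s|+|s|^{p-1})$ by examining the behaviour as $|s|\to 0$ (where the left side is $O(|s|)$ for $p\ge 1$) and as $|s|\to\infty$ (where it is $O(|s|^{p-1})$, using the mean value theorem $|s+1|^p-|s|^p \le p(|s|+1)^{p-1}$); continuity on compact sets handles the intermediate regime.

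With the scalar inequality in hand, I would integrate over $\Td$ and split according to whether $p<2$ or $p\ge 2$, using H\"older's inequality to distribute the exponents so that $g$ always appears through either $\|g\|_{L^1}$ or $\|g\|_{L^{p-1}}^{p-1}$, matching the stated right-hand sides. For the term $\int |f|^{p-1}|g|$ one simply uses H\"older with exponents $\infty$ and $1$, giving $\|f\|_{L^\infty}^{p-1}\|g\|_{L^1}$ in both cases. For the term $\int |f|\,|g|^{p-1}$: when $p\ge 2$ one has $p-1\ge 1$, so H\"older with exponents $\infty$ and $1$ on $|f|\cdot|g|^{p-1}$ yields $\|f\|_{L^\infty}\|g\|_{L^{p-1}}^{p-1}$, which is exactly the first term in \eqref{eq:Lp-sum-decoupling-high}; when $p\in[1,2)$ one has $p-1\in[0,1)$, and H\"older with the dual pair $\bigl(\tfrac{1}{2-p},\tfrac{1}{p-1}\bigr)$ applied to $|f|\cdot|g|^{p-1}$ gives $\|f\|_{L^{1/(2-p)}}\|g\|_{L^1}^{p-1}$, which is the first term in \eqref{eq:Lp-sum-decoupling-low}. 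Adjusting $C_p$ to absorb the (finite, since $\Td$ has measure $1$) embedding constants completes the estimate.

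I do not expect a serious obstacle here; the only mildly delicate point is verifying the scalar inequality uniformly in $p$ on a bounded range, in particular checking that the constant $C_p$ does not blow up. One should note that the bound degenerates as $p\downarrow 1$ only through $C_p$ (the statement restricts to $p\in[1,2)$ and $p\in[2,\infty)$ but in the application only $p\in(1,\infty)$ with finite values occurs), and as $p\to\infty$ the first term in \eqref{eq:Lp-sum-decoupling-high} is the natural one, which is why the case split is made precisely at $p=2$. A clean way to get the scalar bound with an explicit constant is to use the identity $|a+b|^p - |a|^p = p\int_0^1 |a+sb|^{p-2}(a+sb)\cdot b\,ds$ and estimate $\bigl||a+sb|^{p-2}(a+sb)\bigr| \le C_p(|a|^{p-1}+|b|^{p-1})$, then subtract $|b|^p$ and re-examine; this avoids any compactness argument altogether. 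Either route is routine, so the lemma should follow in a few lines once the scalar inequality is isolated.
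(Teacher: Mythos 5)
Your proof is correct and follows essentially the same route as the paper's: both reduce the two $L^p$ estimates to the single pointwise inequality $\bigl||a+b|^p-|a|^p-|b|^p\bigr|\le C_p\bigl(|a|\,|b|^{p-1}+|a|^{p-1}|b|\bigr)$, which is obtained by a homogeneity normalization and a compactness/limit check, and then apply H\"older exactly as you describe. The only cosmetic difference is the normalization: you divide by $|b|^p$ and examine both $|s|\to0$ and $|s|\to\infty$, whereas the paper normalizes $a=e_1$ and uses the symmetry of the left-hand side to restrict to $|\xi|\le1$, so it only needs the origin limit.
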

		\begin{proof}
			Both inequalities follow from the following pointwise inequality
			\[ \left| |a+b|^p-|a|^p-|b|^p \right| \le C_p \left( |a||b|^{p-1} + |a|^{p-1}|b| \right) \]
			by straightforward application of Hölder's inequality. Using homogeneity and symmetry the pointwise version is a consequence of
			\[ \sup_{\xi\in B_1(0) \setminus\{0\}} \frac{ \left| |e_1+\xi|^p-1-|\xi|^p \right| }{|\xi|+|\xi|^{p-1}} = C_p < \infty \]
			which can be verified easily.
		\end{proof}

\begin{proof}[Proof of Corollary \ref{c:additional}]
Since all estimates are uniform in $t$, we will supress dependence on $t$ in what follows. 
Let $A=\Bigl|\|\rho_1\|_{L^p}^p-\|\rho_0\|_{L^p}^p-\|\rho_1-\rho_0\|_{L^p}^p\Bigr|$. We will show that the estimates \eqref{eq:inductive-energy} and \eqref{eq:inductive-L1} imply $|A|=o(1)$ as $\delta\to 0$. Then, since $\delta>0$ is arbitrary, by fixing a smaller value if necessary, we deduce the statement of the corollary.

If $p<2$ apply \eqref{eq:Lp-sum-decoupling-low} to estimate
\begin{align*}
		|A| &\le C_p \left( \|\rho_0\|_{L^{\frac{1}{2-p}}} \|\rho_1-\rho_0\|_{L^1}^{p-1} + \|\rho_0\|_{L^\infty}^{p-1} \|\rho_1-\rho_0\|_{L^1} \right) \\
		&\lesssim \delta^{p-1} + \delta
\end{align*}
		where the implied constant depends on the given smooth density $\rho_0$. 
For $p\ge 2$ we apply \eqref{eq:Lp-sum-decoupling-high} and interpolate \eqref{eq:inductive-energy} and \eqref{eq:inductive-L1} to estimate the $L^{p-1}$-norm of the perturbation:
\begin{align*}
		|A| &\le C_p \left( \|\rho_0\|_{L^\infty} \|\rho_1-\rho_0\|_{L^{p-1}}^{p-1} + \|\rho_0(t)\|_{L^\infty}^{p-1} \|\rho_1-\rho_0\|_{L^1} \right) \\
		&\le C_p \left(  \|\rho_0\|_{L^\infty} \|\rho_1-\rho_0\|_{L^1}^{\frac{1}{p-1}} \|\rho_1-\rho_0\|_{L^p}^{\frac{p(p-2)}{p-1}} +  \|\rho_0\|_{L^\infty}^{p-1} \|\rho_1-\rho_0\|_{L^1} \right) \\
		&\lesssim \delta^{\frac{1}{p-1}} +\delta.
\end{align*}
In both cases we see that $|A|=o(1)$ as $\delta\to 0$. Since $\delta>0$ was arbitrary in Proposition \ref{p:MainProp}, we deduce Corollary \ref{c:additional} by choosing a smaller $\delta>0$ if necessary.
\end{proof}

\subsection{Proof of the quantitative perturbation property}
\label{sec:perturbation}

\begin{proof}[Proof of Proposition~\ref{prop:lifting}] Let $(\rho,u)\in \X_0$ and let $R$ be a smooth time-dependent vector field such that the triple $(\rho,u,R)$ solves \eqref{eq:trans-def} with \eqref{eq:sub-cond-trans}. Denote 
\begin{equation*}
    E(t):=E(\rho,u)(t),\quad i(t):=i(\rho,u)(t)=e(t)-E(t).
\end{equation*}
Since $i$ is continuous on the compact interval $[0,T]$ with $i(t)>M\|R(t)\|_{L^1}$ (from \eqref{eq:sub-cond-trans}), there exists $\delta>0$ so that, for all $t\in [0,T]$,
\begin{equation}\label{e:iLB}
    i(t)\geq M\|R(t)\|_{L^1}+42M\delta.
\end{equation}

We apply Proposition \ref{prop:main} to $(\rho,u,R)$ and obtain $(\tilde\rho,\tilde u,\tilde R)$ with
\begin{subequations}
\begin{align}
    \|\tilde R(t)\|_{L^1}&\leq \delta,\label{e:step1-R}\\
    d_{\X}((\tilde\rho,\tilde u),(\rho,u))&\leq 2\delta,\label{e:step1-d}\\
    \tilde E(t)&\leq E(t)+M\|R(t)\|_{L^1}+\delta,\label{e:step1-UB}\\
    \tilde E(t)&\geq E(t)+\|R(t)\|_{L^1}-2\delta,\label{e:step1-LB}
\end{align}
\end{subequations}
where $\tilde E(t):=E(\tilde \rho,\tilde u)(t)$. Indeed, \eqref{e:step1-R} and \eqref{e:step1-d} follow directly from \eqref{eq:inductive-W1p} and \eqref{eq:inductive-L1}. To see \eqref{e:step1-UB} we use Corollary \ref{c:additional} and \eqref{eq:inductive-energy} to obtain
\begin{align*}
    \tilde E(t)&\leq E(t)+\frac{1}{p}\|(\tilde\rho-\rho)(t)\|_{L^p}^p+\frac{1}{p'}\|(\tilde u-u)(t)\|_{L^{p'}}^{p'}+\delta\\
    &\leq E(t)+M\|R(t)\|_{L^1}+\delta.
\end{align*}
Similarly, to see \eqref{e:step1-LB} we use Corollary \ref{c:additional}, Young's inequality and \eqref{eq:inductive-R} to obtain
\begin{align*}
    \tilde E(t)&\geq E(t)+\frac{1}{p}\|(\tilde \rho-\rho)(t)\|_{L^p}^p+\frac{1}{p'}\|(\tilde u-u)(t)\|_{L^{p'}}^{p'}-\delta\\
    &\geq E(t)+\|(\tilde \rho-\rho)(t)(\tilde u-u)(t)\|_{L^1}-\delta\\
    &\geq E(t)+\|R(t)\|_{L^1}-2\delta.
\end{align*}

Next, for a smooth function $\chi:[0,T]\to\R$ still to be fixed, we define 
\begin{equation*}
\bar R(t,x)=\tilde R(t,x)+\frac{1}{|\Td|}\chi(t)v,
\end{equation*}
where $v\in\R^d$ is an arbitrary but fixed unit vector. From \eqref{e:step1-R} we deduce 
\begin{equation*}
\chi(t)-\delta\leq \|\bar R(t)\|_{L^1}\leq \chi(t)+\delta.
\end{equation*}
We apply Proposition \ref{prop:main} to $(\tilde\rho,\tilde u,\bar R)$ and obtain $(\rho_1,u_1,R_1)$ with
\begin{subequations}
\begin{align}
    \|R_1(t)\|_{L^1}&\leq \delta,\label{e:step2-R}\\
    d_{\X}((\rho_1,u_1),(\tilde \rho,\tilde u))&\leq 2\delta,\label{e:step2-d}\\
    E_1(t)&\leq \tilde E(t)+M\|\bar R(t)\|_{L^1}+\delta\nonumber\\
    &\leq  \tilde E(t)+M\chi(t)+(M+1)\delta,\nonumber\\
    &\leq E(t)+M(\chi(t)+\|R(t)\|_{L^1})+(M+2)\delta\label{e:step2-UB}\\
    E_1(t)&\geq \tilde E(t)+\|\bar R(t)\|_{L^1}-2\delta\nonumber\\
    &\geq \tilde E(t)+\chi(t)-3\delta,\nonumber\\
    &\geq E(t)+(\chi(t)+\|R(t)\|_{L^1})-5\delta,\label{e:step2-LB}
\end{align}
\end{subequations}
where $E_1(t):=E(\rho_1,u_1)(t)$. 

Now we are ready to fix a smooth $\chi:[0,T]\to[0,\infty)$ so that, for all $t\in [0,T]$
\begin{align*}
    \frac{1}{M}i(t)-\|R(t)\|_{L^1}-16\delta< \chi(t)< \frac{1}{M}i(t)-\|R(t)\|_{L^1}-8\delta.
\end{align*}
To see that such $\chi$ exists, it suffices to observe that the right hand side is bounded below by $34\delta$ because of \eqref{e:iLB}.

With this choice of $\chi$ we can estimate
\begin{align*}
    E_1(t)+M\|R_1(t)\|_{L^1}&\leq E_1(t)+M\delta\\
    &\leq E(t)+M(\chi(t)+\|R(t)\|_{L^1})+2(M+1)\delta\\
    &\leq e(t)-4M\delta,
\end{align*}
so that $(\rho_1,u_1)\in\X_0$. Furthermore, 
\begin{align*}
    E_1(t)&\geq E(t)+(\chi(t)+\|R(t)\|_{L^1})-5\delta\\
    &\geq E(t)+\frac{1}{M}(e(t)-E(t))-21\delta\\
    &\overset{\textrm{\eqref{e:iLB}}}{\geq} E(t)+\frac{1}{2M}(e(t)-E(t))
\end{align*}
so that
\begin{equation*}
    i(\rho_1,u_1)(t)=e(t)-E_1(t)\leq \left(1-\frac{1}{2M}\right)(e(t)-E(t))=\left(1-\frac{1}{2M}\right)i(\rho,u)(t).
\end{equation*}
In particular
\begin{equation}\label{e:improvement}
I(\rho_1,u_1)\leq \left(1-\frac{1}{2M}\right)I(\rho,u).
\end{equation}
Recall also $d_{\X}((\rho_1,u_1),(\rho,u))\leq 4\delta$.
Then, we define a sequence $(\rho_k,u_k)$, $k=1,2,\dots$ by the above construction with $\delta_k=2^{1-k}\delta$. The estimate \eqref{e:improvement} continues to hold for every $(\rho_k,u_k,R_k)$, whereas $(\rho_k,u_k)\tox (\rho,u)$. This concludes the proof of the proposition. 
\end{proof}
	
\section{The Navier-Stokes equation}
\label{sec:ns}

In this section we show how the same argument as above and a construction contained in \cite{BurczakModenaSzekelyhidi20} are sufficient to show the genericity of distributional solutions to the Navier-Stokes equations
	\begin{equation}
	\label{eq:ns}
		\begin{aligned}
			\partial_t v + v\cdot\nabla v +\nabla p &= \Delta v \\
			 \div v &= 0
		\end{aligned}
	\end{equation}
with Sobolev regularity. For simplicity we again treat the case of periodic boundary conditions, so that our spatial domain is the $d$-dimensional flat torus $\Td$, with $d\geq 3$. 

\subsection{Functional setup}\label{sec:functional-ns}
\subsubsection{The space of subsolutions and its metric}
	
Let $e:[0,T]\to \R_{>0}$ be a strictly positive smooth function. A subsolution is smooth vector field $v:[0,T]\times \Td\to \R^d$ such that there exists a smooth traceless symmetric tensor field $\re:[0,T]\times \Td\to \R^{d\times d}$ such that the triple $(v,\re)$ solves the Navier-Stokes-Reynolds system
\begin{subequations}
\begin{equation}\label{eq:NS-def}
\begin{split}
\partial_t v + v\cdot\nabla v +\nabla p&= \Delta v+\div \re \\
\div v &=0
\end{split}
\end{equation}
and satisfies 
\begin{equation}\label{eq:sub-cond-ns}
		\frac{1}{2} \|v(t)\|_{L^2}^2 + d\|\re(t)\|_{L^1} < e(t) \quad\textrm{ for all }t\in[0,T].
\end{equation}
\end{subequations}

\begin{remark}
    Recall that $(v,\re)$ is a distributional solution of \eqref{eq:NS-def} if $v$ is divergence-free and for any divergence-free test function $\Phi\in C^{\infty}(\Td\times (0,T);\R^d)$ we have
    $\int_0^T\int_{\Td} v\cdot\partial_t\Phi+(v\otimes v-\re):\nabla\Phi\,dx\,dt=0$. In particular, for any distributional solution the pressure is determined uniquely from the equation $-\Delta p=\div\div(v\otimes v-\re)$ and the condition $\int_{\Td}p\,dx=0$.
\end{remark}

We call $\X_0$ the space of all subsolutions and equip it with the metric $d_{\X}$, given by
\begin{equation*} 
d_{\X} \left( v, v' \right) \coloneqq \sup_{t\in[0,T]} \|v(t)-v'(t)\|_{W^{1,\tilde{p}}}.
\end{equation*}
Now define $\X\subset C\left( [0,T], W^{1,\tilde{p}}  \right)$ to be the completion of $\X_0$ in the topology induced by $d_{\X}$. Then $\X$ is a complete metric space and thus a Baire space and it has infinite cardinality. We will write $\tox$ for convergence in~$d_{\X}$.
	
\begin{remark}\label{remark:bddX-ns}
	Observe that, because elements of $\X_0$ take values in a bounded subset of $L^2$ by \eqref{eq:sub-cond-ns}, so do elements of $\X$. Indeed, this follows from Fatou's lemma. It then follows that $v_k\tox v$ implies $v_k (t) \rightharpoonup v(t)$ weakly in $L^{2}$ for every $t$. 
\end{remark}

\subsubsection{Energy functional and statement of the main result}
	
We define a functional on $\X$ which measures the maximum distance of the energy of subsolutions from the given energy profile.
\begin{subequations}
	\begin{align}
	E(v)(t)&\coloneqq \frac{1}{2} \|v(t)\|_{L^2}^2  \label{eq:def-E-ns} \\
	i(v)(t) &\coloneqq \Bigl( e(t)-E(v)(t)\Bigr)
	\label{eq:def-i-ns} \\
	I(v) &\coloneqq \max_{t\in[0,T]} i(v)(t)
	\label{eq:def-I-ns}
	\end{align}
\end{subequations}
By \eqref{eq:sub-cond-ns} and the compactness of the time interval $i$ and $I$ take only strictly positive values on $\X_0$. Using Remark \ref{remark:bddX} we deduce that $I\geq 0$ on $\X$.
	
We can now formulate the main theorem of this section.
\begin{theorem}\label{thm:main-ns}
	The set of functions $v\in \X$ which
	\begin{enumerate}
		\item are strongly continuous in time, i.e.~$v\in C([0,T];L^2)$,
		\item solve the Navier-Stokes equations \eqref{eq:ns} in the sense of distributions,
		\item have energy profile $e$, i.e. $E(v)(t)=e(t)$ for all $t\in[0,T]$
	\end{enumerate}
	is residual in X.
\end{theorem}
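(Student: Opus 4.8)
The plan is to run the identical three-step Baire scheme used for Theorem~\ref{thm:main}. I would prove Theorem~\ref{thm:main-ns} by concatenating three statements about the space $\X$ and the functional $I$ of \eqref{eq:def-I-ns}: (a) $I$ is a Baire-1 map on $\X$; (b) every point of continuity of $I$ satisfies $I(v)=0$; (c) if $I(v)=0$ then $v\in C([0,T];L^2)$ is a distributional solution of \eqref{eq:ns} with $E(v)(t)=e(t)$ for all $t$. Since the continuity points of a Baire-1 function form a residual set, these three together yield the theorem. The only genuinely new analytic input beyond bookkeeping is a quantitative perturbation property for the Navier-Stokes-Reynolds system, Proposition~\ref{prop:perturbation-ns}, to be extracted from \cite{BurczakModenaSzekelyhidi20}.

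For (a) I would show that $I$ is upper semicontinuous; since it is valued in the bounded interval $[0,\sup_t e]$, the same Bourbaki reference used for Claim~\ref{clm:I-Baire-1} then gives the Baire-1 property. Upper semicontinuity: if $v_n\tox v$ and, along a subsequence, times $t_n\to t_0$ realize $\lim_n i(v_n)(t_n)$, then by Remark~\ref{remark:bddX-ns} and Banach-Alaoglu $v_n(t_n)\rightharpoonup v(t_0)$ weakly in $L^2$, so weak lower semicontinuity of $\|\cdot\|_{L^2}$ gives $\limsup_n i(v_n)(t_n)\le i(v)(t_0)\le I(v)$. For (c): pick $v_k\in\X_0$ with $v_k\tox v$; from $E(v_k)(t)<e(t)=E(v)(t)$, from $v_k(t)\rightharpoonup v(t)$ in $L^2$, and from convexity one gets $E(v_k)(t)\to E(v)(t)$, hence $v_k(t)\to v(t)$ strongly in $L^2$ for each $t$ by uniform convexity. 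Then $v_k\otimes v_k\to v\otimes v$ in $L^1$ pointwise in $t$ by Cauchy-Schwarz, while $d\|\re_k(t)\|_{L^1}\le e(t)-E(v_k)(t)\to 0$ forces $\re_k(t)\to 0$ in $L^1$; dominated convergence in $t$ (uniform $L^\infty_t$ bounds plus pointwise convergence) upgrades this to convergence of the relevant space-time integrals, so $v$ solves \eqref{eq:NS-def} with $\re=0$ distributionally, and the uniquely determined pressure identifies it as a solution of \eqref{eq:ns}. Strong continuity in time follows from the smoothness of $e$ and the lower semicontinuity of $t\mapsto\|v(t)\|_{L^2}$, exactly as in Claim~\ref{clm:energy-to-solution}.

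The heart is (b), which follows from Proposition~\ref{prop:perturbation-ns} — the existence of $\alpha<1$ such that every $v\in\X_0$ admits $v_k\in\X_0$ with $v_k\tox v$ and $I(v_k)\le\alpha I(v)$ — by the verbatim diagonal-and-density argument of Claim~\ref{clm:continuity-to-energy}. To prove Proposition~\ref{prop:perturbation-ns} I would mirror the proof of Proposition~\ref{prop:lifting}: writing $i(t)=e(t)-E(v)(t)$, fix $\delta>0$ with $i(t)\ge d\|\re(t)\|_{L^1}+c\,d\,\delta$ for all $t$, with $c$ a fixed large constant as in \eqref{e:iLB}; apply the one-step convex-integration building block of \cite{BurczakModenaSzekelyhidi20} to $(v,\re)$ to obtain $(\tilde v,\tilde\re)$ with $\|\tilde\re(t)\|_{L^1}\le\delta$, $d_{\X}(\tilde v,v)\le\delta$, $\|(\tilde v-v)(t)\|_{L^1}\le\delta$, and the energy bounds $\tfrac12\|(\tilde v-v)(t)\|_{L^2}^2\le d\|\re(t)\|_{L^1}$ and $\tfrac12\|(\tilde v-v)(t)\|_{L^2}^2\ge \|\re(t)\|_{L^1}-\delta$ (the analogs of \eqref{eq:inductive-energy} and \eqref{eq:inductive-R}). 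Here the role of Corollary~\ref{c:additional} is played by the trivial polarization identity $\|f+g\|_{L^2}^2=\|f\|_{L^2}^2+\|g\|_{L^2}^2+2\langle f,g\rangle$ with $|\langle f,g\rangle|\le\|f\|_{L^\infty}\|g\|_{L^1}$, giving $E(\tilde v)(t)=E(v)(t)+\tfrac12\|(\tilde v-v)(t)\|_{L^2}^2+O(\delta)$. Then, as in the transport proof, I would add to $\tilde\re$ a spatially constant traceless symmetric tensor $\chi(t)A$ with $A$ fixed and $|A|=1$ — note $\div(\chi(t)A)=0$, so the Navier-Stokes-Reynolds structure is preserved — apply the building block a second time to $(\tilde v,\tilde\re+\chi(\cdot)A)$, and choose $\chi\ge 0$ in the range $\tfrac1d i(t)-\|\re(t)\|_{L^1}-O(\delta)<\chi(t)<\tfrac1d i(t)-\|\re(t)\|_{L^1}-O(\delta)$ (nonempty because of the lower bound on $i$) so that the output $v_1$ satisfies $v_1\in\X_0$, $d_{\X}(v_1,v)\le 4\delta$, and $i(v_1)(t)\le(1-\tfrac{1}{2d})i(v)(t)$ for all $t$. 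Iterating with $\delta_k=2^{1-k}\delta$ and $\alpha=1-\tfrac{1}{2d}$ produces the required sequence.

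The main obstacle is isolating from \cite{BurczakModenaSzekelyhidi20} — whose construction is an intermittent Nash-type iteration for non-Newtonian Navier-Stokes, not a single perturbation step — exactly this one-step proposition with estimates uniform in $t$: in particular the cancellation lower bound $\|(\tilde v-v)(t)\|_{L^2}^2\gtrsim\|\re(t)\|_{L^1}-\delta$ (the analog of \eqref{eq:inductive-R}, requiring that the principal perturbation genuinely absorbs $\re$ up to correctors that are $o(1)$ in $C([0,T];L^2)$), and the fact that the new Reynolds stress — which must now also absorb the dissipative term $\Delta(\tilde v-v)$ generated by the highly concentrated building blocks — can still be made $o(1)$ in $C([0,T];L^1)$. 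This is precisely where the Sobolev exponent $r$ and the condition \eqref{eq:p-q-condition} enter: the concentration and oscillation parameters must be tuned so that the perturbation stays bounded in $W^{1,r}$ while the advective, temporal and dissipative errors all vanish, just as $\tilde p$ and \eqref{eq:r-condition} governed the transport case. Once these one-step estimates are available in the stated form, the remainder of the argument is, \emph{mutatis mutandis}, identical to Sections~\ref{sec:functional}--\ref{sec:perturbation}.
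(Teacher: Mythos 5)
Your overall Baire scheme (the three claims, the Baire-1 functional $I$, the treatment of Claims (a) and (c)) agrees with the paper's. The genuine difference is in how Proposition~\ref{prop:perturbation-ns} is extracted from the construction of \cite{BurczakModenaSzekelyhidi20}. You propose to run the transport-case argument verbatim: obtain from a one-step building block the two-sided energy estimates analogous to \eqref{eq:inductive-energy} and \eqref{eq:inductive-R} (so that the one step absorbs $\re$ up to errors), then add a constant traceless tensor $\chi(t)A$ and apply the building block a second time to pump in the prescribed extra energy. The paper does something shorter: the BMS20 perturbation already contains a free scalar parameter $\gamma_0(t)>0$ (entering via $\rho=2\sqrt{\epsilon^2+|\re|^2}+\gamma_0(t)$ in the definition of $u_p$), and Proposition~\ref{prop:NS} controls the energy increment as $\|(v_1-v)(t)\|_{L^2}^2 = 2d\|\re(t)\|_{L^1}+(2\pi)^d d\,\gamma_0(t)+O(\delta)$, so one can simply set $\gamma_0(t)=\frac{1}{(2\pi)^d d}\bigl(i(v)(t)-d\|\re(t)\|_{L^1}\bigr)$ and obtain the contraction $I(v_k)\le\tfrac34 I(v)$ in a \emph{single} application — no constant-tensor device and no second step. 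Your route is valid (adding $\chi(t)A$ indeed preserves the Navier-Stokes--Reynolds structure since $\div(\chi(t)A)=0$, and the requisite lower bound follows from the same two-sided energy estimate when $\gamma_0$ is taken small), but it duplicates work that the $\gamma_0$ parameter of \cite{BurczakModenaSzekelyhidi20} already does for free; in particular the estimate you list as the NS analog of \eqref{eq:inductive-R} and the "main obstacle" you identify are in fact not needed in the form you state them, because the energy increment in BMS20 is already prescribed rather than merely bounded below. Also note the stated range for $\chi(t)$ in your proposal is degenerate as written (same bound on both sides); the two $O(\delta)$ terms must differ, as in \eqref{e:iLB}--\eqref{e:improvement} for the transport case.
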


\begin{remark}
	    In \cite{ColomboDeRosaSorella21} the authors proved genericity of irregular solutions to the Navier-Stokes equations in the space of all $L_t^\infty L_x^2$ solutions by an explicit approximation argument. In contrast, our theorem provides genericity of solutions in a much larger space containing also subsolutions, but without a statement on partial regularity.
\end{remark}

The theorem is once again proved by concatenating three statements about the space $\X$ and the functional $I$.
	
	\begin{claim}
		The functional $I$ is a Baire-1-map on $\X$.
		\label{clm:I-Baire-1-ns}
	\end{claim}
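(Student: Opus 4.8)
The plan is to mirror exactly the proof of Claim~\ref{clm:I-Baire-1} from the transport case. I will show that $I$ is upper semicontinuous on $(\X,d_{\X})$; since it also takes values in the bounded interval $[0,\sup_{t\in[0,T]}e(t)]$ (nonnegativity on $\X$ was already recorded right after~\eqref{eq:def-I-ns}), Proposition~11 in Chapter~IX, section~2.7 of~\cite{Bourbaki_Gen_Top2} then yields the Baire-1 property.

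For the upper semicontinuity I would argue by contradiction: assume $v_n\tox v$ and, after passing to a subsequence, $\lim_{n\to\infty}I(v_n)>I(v)$. Then I pick times $t_n\in[0,T]$ with $i(v_n)(t_n)\to\lim_{n}I(v_n)$ and, after a further subsequence, $t_n\to t_0\in[0,T]$. The one point that needs care is to promote this to weak convergence $v_n(t_n)\rightharpoonup v(t_0)$ in $L^2$: by Remark~\ref{remark:bddX-ns} every element of $\X$ obeys the uniform bound $E(v)(t)\le\sup_{[0,T]}e$, so $\{v_n(t_n)\}$ is bounded in $L^2$ and Banach--Alaoglu extracts a weakly convergent subsequence; on the other hand $v_n\to v$ uniformly in $W^{1,\tilde p}$ with $v\in C([0,T],W^{1,\tilde p})$, whence $\|v_n(t_n)-v(t_0)\|_{W^{1,\tilde p}}\le\|v_n(t_n)-v(t_n)\|_{W^{1,\tilde p}}+\|v(t_n)-v(t_0)\|_{W^{1,\tilde p}}\to0$, so $v_n(t_n)\to v(t_0)$ in the sense of distributions and the weak-$L^2$ limit is forced to be $v(t_0)$.

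Given this, weak lower semicontinuity of $\|\cdot\|_{L^2}$ gives $\liminf_{n}\|v_n(t_n)\|_{L^2}\ge\|v(t_0)\|_{L^2}$, and using continuity of $e$,
\[
\limsup_{n\to\infty} i(v_n)(t_n)=\limsup_{n\to\infty}\Bigl(e(t_n)-\tfrac12\|v_n(t_n)\|_{L^2}^2\Bigr)\le e(t_0)-\tfrac12\|v(t_0)\|_{L^2}^2=i(v)(t_0)\le I(v),
\]
contradicting the choice of $t_n$. I do not expect any genuine obstacle: the argument is the verbatim analog of the transport case, with the pair $(\rho,u)$ replaced by the single field $v$ and $L^p\times L^{p'}$ replaced by $L^2$; the only mildly delicate point is the identification of the weak-$L^2$ limit described above, and it is dispatched just as in the transport proof.
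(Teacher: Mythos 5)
Your proof is correct and is precisely the verbatim analog of the transport-case proof of Claim~\ref{clm:I-Baire-1}, which is exactly what the paper intends (the paper simply states the proofs of Claims~\ref{clm:I-Baire-1-ns}--\ref{clm:energy-to-solution-ns} are identical to their transport counterparts). Your extra care in identifying the weak-$L^2$ limit of $v_n(t_n)$ via the strong $W^{1,\tilde p}$ convergence is a small but genuine gap-filling step that the transport proof in the paper glosses over, and you handle it correctly.
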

	
	\begin{claim}
		If $v\in \X$ is a point of continuity of $I$ then $I(v)=0$.
		\label{clm:continuity-to-energy-ns}
	\end{claim}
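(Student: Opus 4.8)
The plan is to mirror the proof of Claim~\ref{clm:continuity-to-energy}: Claim~\ref{clm:continuity-to-energy-ns} will follow at once from a quantitative perturbation property for the Navier-Stokes-Reynolds system \eqref{eq:NS-def} that is the exact analog of Proposition~\ref{prop:lifting}, namely Proposition~\ref{prop:perturbation-ns}, which asserts that there is $\alpha<1$ such that for every $v\in\X_0$ there is a sequence $v_k\in\X_0$ with $v_k\tox v$ and $I(v_k)\le\alpha I(v)$. Granting this, the claim follows by the same diagonal argument: if $v\in\X$ were a point of continuity of $I$ with $I(v)>0$, fix $\beta\in(\alpha,1)$; by continuity of $I$ and density of $\X_0$ in $\X$ choose smooth subsolutions $v_n\tox v$ with $I(v_n)\to I(v)$, and after relabeling assume $I(v_n)\le\frac{\beta}{\alpha}I(v)$; applying the perturbation property to each $v_n$ gives $v_{n,k}\tox v_n$ with $I(v_{n,k})\le\alpha I(v_n)\le\beta I(v)$; then the diagonal sequence satisfies $v_{n,n}\tox v$ while $\limsup_{n}I(v_{n,n})\le\beta I(v)<I(v)$, contradicting continuity of $I$ at $v$.

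The substance is thus Proposition~\ref{prop:perturbation-ns}, which I would derive from the convex integration construction of \cite{BurczakModenaSzekelyhidi20} exactly as Proposition~\ref{prop:lifting} was derived from Proposition~\ref{p:MainProp} and Corollary~\ref{c:additional}. The building-block statement to extract from \cite{BurczakModenaSzekelyhidi20} should read: under \eqref{eq:p-q-condition}, for every $\delta>0$ and every smooth solution $(v_0,\re_0)$ of \eqref{eq:NS-def} there is a smooth solution $(v_1,\re_1)$ with $\frac12\|(v_1-v_0)(t)\|_{L^2}^2\le d\|\re_0(t)\|_{L^1}$, with $\|(v_1-v_0)(t)\|_{W^{1,r}}+\|\re_1(t)\|_{L^1}\le\delta$ and $\|(v_1-v_0)(t)\|_{L^1}\le\delta$, and with the lower bound $\frac12\|(v_1-v_0)(t)\|_{L^2}^2\ge\|\re_0(t)\|_{L^1}-\delta$, the last coming from the fact that the principal perturbation $w$ is designed so that the fast spatial average of $w\otimes w$ cancels $\re_0$ up to a pressure. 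To this one adds a decoupling estimate $\big|\,\|v_1(t)\|_{L^2}^2-\|v_0(t)\|_{L^2}^2-\|(v_1-v_0)(t)\|_{L^2}^2\,\big|\le\delta$; this is the elementary $p=2$ instance of \eqref{eq:Lp-sum-decoupling-high} — just the $L^2$ inner-product expansion, controlled by $\|v_0\|_{L^\infty}\|v_1-v_0\|_{L^1}$ — and requires no new work. Feeding these into the two-step scheme of the proof of Proposition~\ref{prop:lifting} — a first application to make $\|\re\|_{L^1}$ small, then, after replacing $\re_1$ by $\re_1+\frac{1}{|\Td|}\chi(t)T_0$ for a fixed constant traceless symmetric tensor $T_0$ (whose divergence vanishes, so \eqref{eq:NS-def} is preserved) a second application to tune the gap $e(t)-E(v)(t)$ — yields $I(v_1)\le\bigl(1-\frac{1}{2d}\bigr)I(v)$, and iterating with $\delta_k=2^{1-k}\delta$ produces the required sequence; here the dimension $d$ appearing in \eqref{eq:sub-cond-ns} plays the role that $M$ plays in Section~\ref{sec:transport}.

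The step I expect to be the main obstacle is the lower bound on the energy increment, i.e.\ verifying that the intermittent building blocks of \cite{BurczakModenaSzekelyhidi20} satisfy $\int_{\Td}w\otimes w\,dx=-\re_0+(\text{pressure})+o(1)$ quantitatively as the oscillation and concentration parameters diverge — the Navier-Stokes analog of \eqref{eq:inductive-R}. This requires carefully tracking the normalizations of the (Mikado/Beltrami-type) profiles, the amplitude functions, and the cut-offs handling the region where $\re_0$ is small, just as in the proof sketch of Proposition~\ref{p:MainProp}, with the disjoint-support argument there replaced by the orthogonality and normalization identities for the Navier-Stokes building blocks. Everything else — the Baire bookkeeping, the added constant tensor, the $p=2$ decoupling, and strong continuity in time — is routine and parallel to the transport case.
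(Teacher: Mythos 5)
For Claim~\ref{clm:continuity-to-energy-ns} itself your argument is exactly the paper's: the paper simply remarks that the proof is identical to that of Claim~\ref{clm:continuity-to-energy}, with Proposition~\ref{prop:perturbation-ns} in place of Proposition~\ref{prop:lifting}, and your first paragraph reproduces that diagonal argument verbatim. Where you diverge is in your sketch of how to prove Proposition~\ref{prop:perturbation-ns}: you propose porting the two-step transport scheme (first shrink $\re$, then perturb $\re$ by a constant traceless tensor $\frac{1}{|\Td|}\chi(t)T_0$, then apply again), supported by a building-block statement with a one-sided upper bound and a separate $L^1$-type lower bound on the energy increment analogous to \eqref{eq:inductive-R}, plus an analog of Corollary~\ref{c:additional}. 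The paper instead states Proposition~\ref{prop:NS}, in which the BMS construction already carries a freely-choosable function $\gamma_0(t)>0$ that \emph{prescribes} the energy increment two-sidedly: $\|(v_1-v)(t)\|_{L^2}^2 = 2d\|\re(t)\|_{L^1} + (2\pi)^d d\,\gamma_0(t) + O(\delta)$. Then Proposition~\ref{prop:perturbation-ns} follows in a single step by choosing $\gamma_0(t) = \frac{1}{(2\pi)^d d}\bigl(i(v)(t) - d\|\re(t)\|_{L^1}\bigr)$, with the $p=2$ decoupling $\bigl|\|v_1\|_{L^2}^2 - \|v\|_{L^2}^2 - \|v_1-v\|_{L^2}^2\bigr| = 2|\langle v, v_1-v\rangle|$ handled inline; no constant-tensor trick, no iteration, and no analogue of Corollary~\ref{c:additional} are needed. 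Your route should work — and correctly identifies the lower bound on $\int |u_p|^2$ as the technical crux, as well as the role of $d$ versus $M$ — but it is more circuitous than necessary, since the Navier–Stokes construction in \cite{BurczakModenaSzekelyhidi20} already ships with the $\gamma_0$ tuning parameter that, in the transport case, had to be manufactured by hand via the $\chi(t)v$ insertion.
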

	
	\begin{claim}
		If $v\in \X$ such that $I(v) =0$, then $v$ is strongly continuous in $L^2$ and a solution of \eqref{eq:ns}.
		\label{clm:energy-to-solution-ns}
	\end{claim}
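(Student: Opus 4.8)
The plan is to prove Claim~\ref{clm:energy-to-solution-ns} by mimicking almost verbatim the argument for Claim~\ref{clm:energy-to-solution}, with the product structure $\rho u$ replaced by the quadratic nonlinearity $v\otimes v$ and the defect field $R$ replaced by the Reynolds stress $\re$. Let $v\in\X$ with $I(v)=0$, and pick a sequence $v_k\in\X_0$ with $v_k\tox v$. By definition of $\X_0$ we have $E(v_k)(t)<e(t)$ for every $t$, and by Remark~\ref{remark:bddX-ns} we have $v_k(t)\rightharpoonup v(t)$ weakly in $L^2$ for every $t$. Since $I(v)=0$ means $E(v)(t)=e(t)$ for all $t$, the weak lower semicontinuity of the $L^2$-norm combined with the upper bound $E(v_k)(t)<e(t)=E(v)(t)$ sandwiches the energies:
\begin{equation*}
E(v)(t)\leq\liminf_{k\to\infty}E(v_k)(t)\leq\limsup_{k\to\infty}E(v_k)(t)\leq E(v)(t),
\end{equation*}
so $E(v_k)(t)\to E(v)(t)$ for every $t$. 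Because $L^2$ is uniformly convex, weak convergence together with convergence of norms upgrades to strong convergence: $v_k(t)\to v(t)$ in $L^2$ for every $t$.

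Next I would pass to the limit in the equation. For each $k$ there is a smooth $\re_k$ with $(v_k,\re_k)$ solving \eqref{eq:NS-def} and satisfying \eqref{eq:sub-cond-ns}; from the latter and $E(v_k)(t)\to e(t)$ we get
\begin{equation*}
\limsup_{k\to\infty}d\|\re_k(t)\|_{L^1}\leq e(t)-\lim_{k\to\infty}E(v_k)(t)=0,
\end{equation*}
so $\re_k(t)\to 0$ strongly in $L^1$ for every $t$. The nonlinear term is handled by writing $v_k\otimes v_k-v\otimes v=(v_k-v)\otimes v_k+v\otimes(v_k-v)$ and applying Cauchy–Schwarz: each summand is bounded in $L^1$ by $\|v_k(t)-v(t)\|_{L^2}(\|v_k(t)\|_{L^2}+\|v(t)\|_{L^2})$, which tends to $0$ for every $t$ since the $L^2$-norms are uniformly bounded (by $2\sup_t e$). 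Thus $(v_k\otimes v_k)(t)\to(v\otimes v)(t)$ strongly in $L^1$ for every $t$. Finally, as functions of $t$, the quantities $\|v_k(t)-v(t)\|_{L^2}$, $\|(v_k\otimes v_k-v\otimes v)(t)\|_{L^1}$ and $\|\re_k(t)\|_{L^1}$ converge pointwise to $0$ and are uniformly bounded (again by $\sup_t e$ up to constants), so dominated convergence gives convergence in $L^1_t L^2_x$, $L^1_t L^1_x$ and $L^1_t L^1_x$ respectively. Testing the weak formulation $\int_0^T\int_{\Td}v_k\cdot\partial_t\Phi+(v_k\otimes v_k-\re_k):\nabla\Phi+v_k\cdot\Delta\Phi\,dx\,dt=0$ against a fixed divergence-free $\Phi\in C^\infty_c(\Td\times(0,T);\R^d)$ and passing to the limit shows $(v,\re\equiv 0)$ — hence $v$ — solves \eqref{eq:ns} distributionally, with the pressure recovered as in the Remark from $-\Delta p=\div\div(v\otimes v)$.

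For strong continuity in time, I would argue exactly as indicated at the end of the proof of Claim~\ref{clm:energy-to-solution}: the map $t\mapsto\|v(t)\|_{L^2}^2=2E(v)(t)=2e(t)$ is continuous (indeed smooth) by hypothesis, and $v\in\X\subset C([0,T];W^{1,\tilde p})$ gives $v(t_n)\rightharpoonup v(t)$ weakly in $L^2$ whenever $t_n\to t$ (using the uniform $L^2$-bound and uniqueness of the weak limit coming from $W^{1,\tilde p}$-convergence); weak convergence plus convergence of norms yields $v(t_n)\to v(t)$ strongly in $L^2$, i.e.\ $v\in C([0,T];L^2)$.

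The only genuinely non-routine point is the interplay between the three modes of convergence — $\X$-convergence (i.e.\ in $C_tW^{1,\tilde p}$), weak $L^2$ convergence at each fixed time, and strong $L^2$ convergence at each fixed time — and checking that the uniform-in-time bounds coming from \eqref{eq:sub-cond-ns} legitimately feed into a single application of dominated convergence on $[0,T]$; everything else is a direct transcription of the transport-equation case with $\rho u\rightsquigarrow v\otimes v$ and an extra linear term $\Delta v$ that passes to the limit trivially since $v_k\to v$ in $C_tW^{1,\tilde p}$ controls no second derivatives but the term is tested against $\Delta\Phi$ with $\Phi$ smooth, so only $v_k\to v$ in, say, $L^1_tL^1_x$ is needed.
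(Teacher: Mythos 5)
Your proof is correct and follows exactly the approach the paper intends: the paper states that the proof of Claim~\ref{clm:energy-to-solution-ns} is identical to that of Claim~\ref{clm:energy-to-solution}, and you have carried out that transcription faithfully, replacing $\rho u$ by $v\otimes v$, $R$ by $\re$, handling the Cauchy--Schwarz bound on the quadratic term and the (harmless) viscosity term $\Delta v$ in the weak formulation, and reproducing the uniform-convexity upgrade to strong convergence, the dominated-convergence passage in time, and the strong-continuity-in-time argument.
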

	As in Section \ref{sec:functional}, since the points of continuity of a Baire-1-map form a residual set, Claims~\ref{clm:I-Baire-1-ns}, \ref{clm:continuity-to-energy-ns} and \ref{clm:energy-to-solution-ns} indeed prove Theorem~\ref{thm:main-ns}. \qed

The proofs of Claims \ref{clm:I-Baire-1-ns}, \ref{clm:continuity-to-energy-ns} and \ref{clm:energy-to-solution-ns} are identical to those of Claims~\ref{clm:I-Baire-1}. The key point is again the following quantitative perturbation property, the exact analogue of Proposition \ref{prop:lifting} in the current setting:
\begin{proposition}[Quantitative Perturbation Property]\label{prop:perturbation-ns}
There is a constant $\alpha<1$ such that for any $v\in\X_0$ there is a sequence $(v_k)_{k\in\N} \subset \X_0$ satisfying
\begin{equation*}
	   v_k\tox v \ \text{ and } \ I(v_k)\le \alpha I(v).
\end{equation*}
\end{proposition}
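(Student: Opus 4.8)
The plan is to mirror exactly the structure of the proof of Proposition~\ref{prop:lifting} for the transport equation, replacing the application of Proposition~\ref{p:MainProp} by the convex integration construction of \cite{BurczakModenaSzekelyhidi20}. First I would record the analogue of Proposition~\ref{p:MainProp} in the Navier--Stokes setting: there is a constant $M$ (here $M=d$, matching \eqref{eq:sub-cond-ns}) such that for any smooth solution $(v_0,\re_0)$ of \eqref{eq:NS-def} and any $\delta>0$ there is another smooth solution $(v_1,\re_1)$ with the key estimates
\begin{subequations}
\begin{align}
\tfrac12\|(v_1-v_0)(t)\|_{L^2}^2 &\leq d\|\re_0(t)\|_{L^1},\label{eq:ns-ind-energy}\\
\|(v_1-v_0)(t)\|_{W^{1,r}}+\|\re_1(t)\|_{L^1} &\leq \delta,\label{eq:ns-ind-W1r}\\
\|(v_1-v_0)(t)\|_{L^1} &\leq \delta,\label{eq:ns-ind-L1}\\
\bigl|\|v_1(t)\|_{L^2}^2-\|v_0(t)\|_{L^2}^2-\|(v_1-v_0)(t)\|_{L^2}^2\bigr| &\leq \delta\label{eq:ns-ind-decouple}
\end{align}
\end{subequations}
for all $t$. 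Estimates \eqref{eq:ns-ind-energy}--\eqref{eq:ns-ind-W1r} are the content of the main proposition in \cite{BurczakModenaSzekelyhidi20} under the condition \eqref{eq:p-q-condition} (the perturbation is built from intermittent Mikado-type flows concentrated at scale governed by the oscillation/concentration parameters, and the $W^{1,r}$ bound on the perturbation is precisely where \eqref{eq:p-q-condition} enters); \eqref{eq:ns-ind-L1} follows because the principal perturbation has $L^1$ norm that is $o(1)$ as the oscillation parameter tends to infinity; and \eqref{eq:ns-ind-decouple} follows from the Brezis--Lieb-type Lemma above with $p=2$, where the right-hand side reduces to $C(\|v_0\|_{L^\infty}\|v_1-v_0\|_{L^1}+\|v_0\|_{L^\infty}\|v_1-v_0\|_{L^1})\lesssim\delta$ using \eqref{eq:ns-ind-L1} — indeed for $p=2$ the decoupling is exact up to a single cross term, so this is even cleaner than the transport case. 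I would state this as a proposition (cf.\ Remark~\ref{rem:proof}) with a proof sketch referencing the relevant lemmas of \cite{BurczakModenaSzekelyhidi20}, exactly as was done for Proposition~\ref{p:MainProp}.

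With that in hand the proof proceeds verbatim as for Proposition~\ref{prop:lifting}. Given $v\in\X_0$ with associated $\re$, set $E(t):=E(v)(t)$ and $i(t):=e(t)-E(t)$; by compactness of $[0,T]$ and \eqref{eq:sub-cond-ns} there is $\delta>0$ with $i(t)\geq d\|\re(t)\|_{L^1}+42d\delta$ for all $t$. Applying the perturbation proposition once to $(v,\re)$ yields $(\tilde v,\tilde\re)$ with $\|\tilde\re(t)\|_{L^1}\leq\delta$, $d_{\X}(\tilde v,v)\leq\delta$, and — combining \eqref{eq:ns-ind-energy} and \eqref{eq:ns-ind-decouple} — the two-sided bound $E(t)+\|\re(t)\|_{L^1}-2\delta\leq \tilde E(t)\leq E(t)+d\|\re(t)\|_{L^1}+\delta$. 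Then, exactly as before, one adds to $\tilde\re$ a time-dependent constant-in-space traceless symmetric multiple $\chi(t)N$ of a fixed unit-norm traceless symmetric matrix $N$ (so that $\div(\chi(t)N)=0$ and the equation \eqref{eq:NS-def} is unaffected up to modifying the pressure), giving $\bar\re(t)=\tilde\re(t)+\chi(t)N$ with $\|\bar\re(t)\|_{L^1}$ pinched between $\chi(t)-\delta$ and $\chi(t)+\delta$; one applies the perturbation proposition a second time to $(\tilde v,\bar\re)$ to obtain $v_1$. Choosing a smooth $\chi\colon[0,T]\to[0,\infty)$ with $\tfrac1d i(t)-\|\re(t)\|_{L^1}-16\delta<\chi(t)<\tfrac1d i(t)-\|\re(t)\|_{L^1}-8\delta$ — possible since the right side exceeds $34\delta$ by the lower bound on $i$ — gives both $E(v_1)(t)+d\|\re_1(t)\|_{L^1}<e(t)$, i.e.\ $v_1\in\X_0$, and $i(v_1)(t)\leq(1-\tfrac1{2d})i(v)(t)$, hence $I(v_1)\leq(1-\tfrac1{2d})I(v)$, with $d_{\X}(v_1,v)\leq4\delta$. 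Iterating with $\delta_k=2^{1-k}\delta$ produces the desired sequence $v_k\tox v$ with $I(v_k)\leq\alpha I(v)$ and $\alpha=1-\tfrac1{2d}$.

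I expect the only genuine obstacle to be verifying that the construction of \cite{BurczakModenaSzekelyhidi20} indeed delivers \eqref{eq:ns-ind-energy}--\eqref{eq:ns-ind-L1} in the precise quantitative form needed here — in particular that the $L^2$ energy increment of the perturbation is bounded by (a constant times) $\|\re_0\|_{L^1}$ uniformly in $t$, and that the $L^1$ norm and $W^{1,r}$ norm of the perturbation can be made arbitrarily small while the energy increment stays comparable to $\|\re_0\|_{L^1}$. This is a matter of bookkeeping within the cited construction (tracking the oscillation and concentration parameters and the corrector estimates, as in the proof sketch of Proposition~\ref{p:MainProp}), and the condition $\tfrac12+\tfrac1r>1+\tfrac1d$ is exactly what makes the $W^{1,r}$ estimate on the intermittent perturbation summable. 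Everything downstream — Claims~\ref{clm:I-Baire-1-ns}, \ref{clm:continuity-to-energy-ns}, \ref{clm:energy-to-solution-ns} and the deduction of Theorem~\ref{thm:main-ns} — is then word-for-word identical to Section~\ref{sec:functional}, using the Brezis--Lieb/uniform convexity argument for $p=2$ to upgrade weak to strong convergence and Hölder's inequality to pass to the limit in the nonlinearity $v_k\otimes v_k$.
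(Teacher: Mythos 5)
The paper's proof of Proposition~\ref{prop:perturbation-ns} takes a genuinely different and simpler route than yours, and your proposed route as written has a real gap. The paper's Proposition~\ref{prop:NS} contains a free function $\gamma_0(t)>0$ and, crucially, a \emph{two-sided} energy estimate \eqref{eq:NS-inductive-energy}: the construction of \cite{BurczakModenaSzekelyhidi20} pins the energy increment $\|(v_1-v)(t)\|_{L^2}^2$ to the prescribed value $2d\|\re(t)\|_{L^1}+(2\pi)^dd\gamma_0(t)$ up to $O(\delta)$. The paper then simply chooses $\gamma_0(t)=\tfrac{1}{(2\pi)^dd}\bigl(i(v)(t)-d\|\re(t)\|_{L^1}\bigr)$ so that a \emph{single} application of the perturbation proposition lands the new energy exactly halfway between $E(v)(t)$ and $e(t)$, giving $I(v_k)\le\tfrac34 I(v)$ directly; no second application, no auxiliary $\chi(t)$, no added constant-in-space Reynolds stress.

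Your proposal instead transplants the transport-equation strategy verbatim, but your stated perturbation proposition \eqref{eq:ns-ind-energy}--\eqref{eq:ns-ind-decouple} lists only \emph{upper} bounds on the energy increment, and yet in the next sentence you assert the two-sided bound $E(t)+\|\re(t)\|_{L^1}-2\delta\le\tilde E(t)$. Nothing in \eqref{eq:ns-ind-energy} or \eqref{eq:ns-ind-decouple} produces a lower bound: \eqref{eq:ns-ind-decouple} only decouples, and \eqref{eq:ns-ind-energy} is one-sided. In the transport case that lower bound was extracted from the separate estimate \eqref{eq:inductive-R} on the cross-product $\|(\rho_1-\rho_0)(u_1-u_0)\|_{L^1}$ combined with Young's inequality; you have not stated, let alone justified, any Navier--Stokes analogue. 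This is the genuine gap: as written your iteration with $\chi(t)N$ cannot get off the ground because you have no control from below on $\tilde E(t)$, and hence no way to verify $v_1\in\X_0$ nor the contraction $i(v_1)(t)\le(1-\tfrac1{2d})i(v)(t)$. The gap is in principle repairable, since the construction in \cite{BurczakModenaSzekelyhidi20} does in fact pin the energy increment from both sides (this is precisely \eqref{eq:NS-inductive-energy}); but once you have the two-sided estimate in hand, the paper's observation is that you may as well exploit the freedom in $\gamma_0$ to prescribe the increment and finish in one step, which also yields a cleaner constant $\alpha=\tfrac34$ independent of $d$. (Two minor points: your $\chi(t)N$ should be normalized, e.g.\ $\tfrac{1}{|\Td|}\chi(t)N$, so that its $L^1$ norm equals $\chi(t)$; and \eqref{eq:ns-ind-L1} is redundant given \eqref{eq:ns-ind-W1r} and the Sobolev embedding on $\Td$.)
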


\subsection{The main proposition and proof of the quantitative perturbation property}
\label{sec:perturbation-ns}

The proof of Proposition \ref{prop:perturbation-ns} follows again from a perturbation property for weak subsolutions of the Navier-Stokes equations (c.f.~Proposition \ref{prop:main}), and is a slightly modified version of the main proposition in \cite{BurczakModenaSzekelyhidi20} (c.f.~\cite[Theorem C]{BurczakModenaSzekelyhidi20}).
	\begin{proposition}
		\label{prop:NS}
        Let $\gamma_0:[0,T]\to \R_{>0}$ smooth, strictly positive, let $\delta>0$ and $r<\frac{2d}{d+2}$. If $(v,p,\re)$ is a smooth solution to \eqref{eq:NS-def} there is another smooth solution $(v_1,p_1,\re_1)$ such that, for all $t\in[0,T]$,
        \begin{subequations}
        \begin{align}
           \|(v_1-v)(t)\|_{W^{1,r}} + 	\|\re_1(t)\|_{L^1} &< \delta\,,	\label{eq:NS-inductive-Sobolev-R} \\
			 \left| \|(v_1-v)(t)\|_{L^2}^2-2d\|\re(t)\|_{L^1} -(2\pi)^dd\gamma_0(t) \right| &< \delta\,.
		\label{eq:NS-inductive-energy}
		\end{align}
        \end{subequations}
	\end{proposition}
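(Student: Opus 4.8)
The plan is to mirror the proof of Proposition~\ref{p:MainProp} and its companion Corollary~\ref{c:additional} in the Navier--Stokes setting, extracting the three ingredients we need from the convex integration construction of \cite{BurczakModenaSzekelyhidi20}. Concretely, given the smooth solution $(v,p,\re)$ of \eqref{eq:NS-def} and the target profile $\gamma_0$, I would first apply the main proposition of \cite{BurczakModenaSzekelyhidi20} (their Theorem~C, with the concentration/oscillation parameters chosen as there) to produce a perturbation $w=v_1-v$ together with the new defect $\re_1$, so that the ``small'' estimate \eqref{eq:NS-inductive-Sobolev-R} holds: the $W^{1,r}$ smallness of $w$ and the $L^1$ smallness of $\re_1$ are exactly the content of the corresponding estimates in \cite{BurczakModenaSzekelyhidi20} once $r<\tfrac{2d}{d+2}$ is used to absorb the derivative loss against the concentration gain. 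This part is essentially a citation.

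The substantive point is the energy identity \eqref{eq:NS-inductive-energy}. Here I would follow the structure of the proof sketch of Proposition~\ref{p:MainProp}: the principal perturbation is a sum of intermittent (Mikado-type) building blocks $w=\sum_j a_j W^j$ (plus correctors), where the amplitudes $a_j$ are tuned, via a geometric/algebraic lemma, so that $\sum_j a_j^2 (W^j\otimes W^j)$ reproduces $\re$ plus the constant-in-space contribution $c\gamma_0\,\mathrm{Id}$ for a suitable normalization constant $c$. Taking the trace and integrating in $x$, the building blocks are normalized to have unit $L^2$ average and essentially disjoint supports, so $\int_{\Td}|w|^2\,dx$ equals $\int_{\Td}\mathrm{tr}\big(\sum_j a_j^2 W^j\otimes W^j\big)\,dx + (\text{oscillation error})$, and the leading term is $2d\|\re(t)\|_{L^1}+(2\pi)^d d\gamma_0(t)$ up to the explicit constants; the oscillation errors, the corrector contributions and the cross terms are all $o(1)$ as the parameters diverge, by the stationary-phase/averaging estimates (the analogue of \cite[Lemma 2.6]{ModenaSzekelyhidi18}) together with the corrector bounds. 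One also needs the analogue of the Brezis--Lieb-type Lemma used in Corollary~\ref{c:additional} to justify replacing $\|v_1\|_{L^2}^2-\|v\|_{L^2}^2$ by $\|w\|_{L^2}^2$ up to $O(\delta)$; here $p=2$ makes this trivial, since $\|v+w\|_{L^2}^2-\|v\|_{L^2}^2-\|w\|_{L^2}^2 = 2\langle v,w\rangle_{L^2}$, and $\langle v,w\rangle_{L^2}\to 0$ because $w$ oscillates at high frequency against the fixed smooth $v$ (again an averaging estimate). A cut-off $\chi_j$ on the region where $|\re^j|$ is small, exactly as in \eqref{eq:cutoff}, handles the places where the amplitudes would otherwise be non-smooth, contributing an $O(\delta)$ error.

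The main obstacle I anticipate is bookkeeping the constants and the precise algebraic normalization: one must identify the exact value of the geometric constant relating $\int |w|^2$ to $\mathrm{tr}\,\re$ and to the added term $\gamma_0$ so that the factors $2d$ and $(2\pi)^d d$ in \eqref{eq:NS-inductive-energy} come out correctly, and one must check that the freedom in \cite{BurczakModenaSzekelyhidi20} to add a prescribed traceless tensor extends to adding the trace part $c\gamma_0\,\mathrm{Id}$ as well (equivalently, that one may feed the construction the modified defect $\re + c\gamma_0\,\mathrm{Id}$, which is symmetric but not traceless, after subtracting off its trace via a standard reduction). Once the normalization is pinned down, the remaining estimates are the same averaging and corrector bounds already invoked in the transport case, so the proof is routine modulo this accounting; I would state the needed algebraic lemma explicitly and then assemble the error terms in the order: principal term, correctors, cross term $\langle v,w\rangle$, cut-off error, oscillation error, each shown to be $O(\delta)$ or $o(1)$, concluding \eqref{eq:NS-inductive-energy}.
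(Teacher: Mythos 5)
Your proposal is essentially the paper's approach: both defer to the Mikado construction of \cite{BurczakModenaSzekelyhidi20}, taking the principal perturbation to reproduce the Reynolds tensor $\rho\,\mathrm{Id}+\re$ with $\gamma_0$ built into $\rho$, and read off \eqref{eq:NS-inductive-energy} by tracing and integrating while the correctors contribute $o(1)$. Two small points of divergence: in \cite{BurczakModenaSzekelyhidi20} the amplitudes are made smooth via the regularization $\rho=2\sqrt{\epsilon^2+|\re|^2}+\gamma_0(t)$ (an $O(\epsilon)$ error, bounded since $|\re|\le\sqrt{\epsilon^2+|\re|^2}\le\epsilon+|\re|$), not via cut-offs $\chi_j$ as in the transport case, so the $\chi_j$ you invoke are unnecessary here; and the Brezis--Lieb/cross-term step you include is not needed for Proposition~\ref{prop:NS} itself, since \eqref{eq:NS-inductive-energy} is stated directly in terms of $\|(v_1-v)(t)\|_{L^2}^2$ --- that identity is only used one step later, in the proof of Proposition~\ref{prop:perturbation-ns}.
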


Let us first show that this indeed implies a quantitative perturbation property. 
\begin{proof}[Proof of Proposition \ref{prop:perturbation-ns}]
Let $v\in \X_0$ and let $\re$ be such that \eqref{eq:NS-def} and \eqref{eq:sub-cond-ns} hold. Set
	\[ 
	\gamma_0(t)= \frac{1}{(2\pi)^dd}\left(i(v)(t)-d\|\re(t)\|_{L^1})\right), 
	\]
	and note that $\gamma_0$ is strictly positive because of \eqref{eq:sub-cond-ns} and smooth. Fix a sequence $\delta_k\to 0$ of sufficiently small positive numbers and apply Proposition \ref{prop:NS} to obtain smooth solutions $(v_k,\re_k)$ of \eqref{eq:NS-def} satisfying \eqref{eq:NS-inductive-Sobolev-R}--\eqref{eq:NS-inductive-energy} with $\delta_k>0$. From \eqref{eq:NS-inductive-Sobolev-R} it is clear that $v_k\tox v$.
	
	It remains to estimate $I(v_k)$. First observe that it is sufficient to control the relative energy $\|v_k-v\|_{L^2}$ since
	\[   
	\left| \|v_k\|_{L^2}^2-\|v\|_{L^2}^2 - \|v_k-v\|_{L^2}^2 \right| = 2|\langle v, v_k \rangle_{L^2} | \le 2\|v\|_{L^\infty} \|v_k\|_{L^1} = O(\delta_k),
	\]
	where in the last step we used \eqref{eq:NS-inductive-Sobolev-R} and embedding of the Sobolev space $W^{1,r}$ into $L^1$. Therefore, by \eqref{eq:NS-inductive-energy} and the choice of $\gamma_0(t)$
 	\begin{align*}
    \|v_k(t)\|_{L^2}^2 &= \|v(t)\|_{L^2}^2 + 2d\|\re(t)\|_{L^1} + (2\pi)^dd\gamma_0(t) + O(\delta_k) \\
    &= \frac12 \|v(t)\|_{L^2}^2+e(t)   + d \|\re(t)\|_{L^1} +O(\delta_k)	
        \end{align*}
    which can be used in two directions. First to show the perturbation property:
    \begin{align*}
        I(v_k) &= \sup_t \left( e(t)-\tfrac12\|v_k(t)\|_{L^2}^2 \right) = \sup_t \tfrac12 \left( e(t)-\tfrac12\|v(t)\|_{L^2}^2 - d\|\re(t)\|_{L^1} \right) + O(\delta_k) \\
        &\le \frac12 I(v) + O(\delta_k) \le \frac34 I(v)
    \end{align*}
	if $\delta_k$ is sufficiently small (depending on $v$). On the other hand, using \eqref{eq:NS-inductive-Sobolev-R}
	\begin{align*}
	    \frac12\|v_k(t)\|_{L^2}^2 + d\|\re_k(t)\|_{L^1} = \frac12 \left( e(t) + \frac12\|v(t)\|_{L^2}^2 + d \|\re(t)\|_{L^1} \right) +O(\delta_k)
	\end{align*}
	and by \eqref{eq:sub-cond-ns} the term in brackets is strictly smaller than $2e(t)$ on the compact interval $[0,T]$, so $\delta_k$ can be chosen sufficiently small so that $(v_k,\re_k)$ satisfies \eqref{eq:sub-cond-ns} for every $t\in[0,T]$. This concludes the proof of Proposition \ref{prop:perturbation-ns}.
\end{proof}

\subsection{Proof of the main proposition}
	We show that a minor adjustment in the construction in \cite{BurczakModenaSzekelyhidi20} and an additional estimate provides a proof of Proposition~\ref{prop:NS}. More precisely, we apply the proof of \cite[Proposition 1]{BurczakModenaSzekelyhidi20} in the context of \cite[Theorem B]{BurczakModenaSzekelyhidi20} (see also \cite[Remark 11]{BurczakModenaSzekelyhidi20}), for which the choice of parameters is defined in Section 9 (and includes the 3D Navier-Stokes system, see \cite[Section 1.5]{BurczakModenaSzekelyhidi20}). More precisely, let us assume $(v,\re)$ is a smooth solution of \eqref{eq:NS-def}. We recall that the construction in \cite[Section 5]{BurczakModenaSzekelyhidi20} proceeds by defining 
$$
v_1= v+u_p + u_c,\quad \mathring{R}_1=-(R_{lin}+R_{corr}+R_{quadr}),
$$
where $u_p$ and $u_c$ are oscillatory/concentrated vectorfields based on Mikado flows, with large frequency/concentration parameters $\lambda_1,\lambda_2,\mu,\omega$, all of which are powers of some sufficiently large $\lambda\in\N$.
Since in our setting $\mathring{R}$ is assumed to be smooth, there is no need to regularize and thus the principal part of the perturbation, $u_p$, is defined in (49) using Mikado flows corresponding to the Reynolds stress term $\rho\textrm{Id}+\mathring{R}$, with $\rho=2\sqrt{\epsilon^2+|\mathring{R}(x,t)|^2}+\gamma_0(t)$ (c.f.~(44)). Although a choice for $\gamma_0(t)>0$ is specified in (45) for the purposes of control of the kinetic energy in Theorems A and B in \cite{BurczakModenaSzekelyhidi20}, for the rest of the construction it is only relevant that $\gamma_0>0$ (c.f.
~also with the proof of \cite[Theorem C]{BurczakModenaSzekelyhidi20}). In particular we deduce estimates (13b) and (13c) of \cite[Proposition 1]{BurczakModenaSzekelyhidi20}, which immediately imply \eqref{eq:NS-inductive-Sobolev-R} above. In order to verify \eqref{eq:NS-inductive-energy} we first observe that (79) and (80) from the proof of \cite[Proposition 1]{BurczakModenaSzekelyhidi20}  provide the estimate 
\begin{equation}\label{e:uc}
\|u_c\|_{L^2} \lesssim \frac{\mu
^{d/2}}{\omega} + \frac{\lambda_1\mu}{\lambda_2} \left(1+\frac{(\lambda_1\mu)^N}{\lambda_2^{N-2}} \right) = o(1) 
\end{equation}
as $\lambda\to\infty$ because of (108) and (110). Thus it remains to estimate $u_p$.
This is done in (97), (100) and (101) in \cite{BurczakModenaSzekelyhidi20}, yielding
\[ 
\int_{\Td} |u_p(x,t)|^2 =  \int_{\Td} 2d\sqrt{\epsilon^2+|\re(x,t)|^2} +d\gamma_0(t) \,dx + o(1) \]
where the $o(1)$-notation again refers to the limit $\lambda\to\infty$ and $\epsilon>0$ is an arbitrary regularization parameter. Since $|\re|\le \sqrt{\epsilon^2+|\re|^2} \le \epsilon+|\re|$ we obtain
\begin{equation}\label{e:up}
\int_{\Td}|u_p(x,t)|^2-d\gamma_0(t)-2d|\re(x,t)| \,dx= O(\epsilon)+o(1), 
\end{equation}
Since $\epsilon>0$ and $\lambda\in\N$ are arbitrary, the estimates \eqref{e:uc}-\eqref{e:up} imply \eqref{eq:NS-inductive-energy}. This concludes the proof of the proposition.\qed

	\bibliographystyle{acm}
\bibliography{baire_transport}

\begin{thebibliography}{10}

\bibitem{Bourbaki_Gen_Top2}
{\sc Bourbaki, N.}
\newblock {\em General topology. {C}hapters 5--10}.
\newblock Elements of Mathematics (Berlin). Springer-Verlag, Berlin, 1998.
\newblock Translated from the French, Reprint of the 1989 English translation.

\bibitem{BrezisLieb}
{\sc Br\'{e}zis, H., and Lieb, E.}
\newblock A relation between pointwise convergence of functions and convergence
  of functionals.
\newblock {\em Proc. Amer. Math. Soc. 88}, 3 (1983), 486--490.

\bibitem{BrueColomboDeLellis21}
{\sc Bru\'{e}, E., Colombo, M., and De~Lellis, C.}
\newblock Positive solutions of transport equations and classical nonuniqueness
  of characteristic curves.
\newblock {\em Arch. Ration. Mech. Anal. 240}, 2 (2021), 1055--1090.

\bibitem{BuckModena22}
{\sc Buck, M., and Modena, S.}
\newblock On the failure of the chain rule for the divergence of sobolev vector
  fields, 2022.

\bibitem{BDLSV19}
{\sc Buckmaster, T., de~Lellis, C., Sz\'{e}kelyhidi, Jr., L., and Vicol, V.}
\newblock Onsager's conjecture for admissible weak solutions.
\newblock {\em Comm. Pure Appl. Math. 72}, 2 (2019), 229--274.

\bibitem{BuckmasterVicol19}
{\sc Buckmaster, T., and Vicol, V.}
\newblock Nonuniqueness of weak solutions to the {N}avier-{S}tokes equation.
\newblock {\em Ann. of Math. (2) 189}, 1 (2019), 101--144.

\bibitem{BurczakModenaSzekelyhidi20}
{\sc Burczak, J., Modena, S., and Sz\'{e}kelyhidi, L.}
\newblock Non uniqueness of power-law flows.
\newblock {\em Comm. Math. Phys. 388}, 1 (2021), 199--243.

\bibitem{cellina2005}
{\sc Cellina, A.}
\newblock A view on differential inclusions.
\newblock {\em Rend. Semin. Mat. Univ. Politec. Torino 63}, 3 (2005), 197--209.

\bibitem{ColomboDeRosaSorella21}
{\sc Colombo, M., {de Rosa}, L., and Sorella, M.}
\newblock Typicality results for weak solutions of the incompressible
  navier--stokes equations, 2021.

\bibitem{dacorogna_marcellini97}
{\sc Dacorogna, B., and Marcellini, P.}
\newblock General existence theorems for {H}amilton-{J}acobi equations in the
  scalar and vectorial cases.
\newblock {\em Acta Math. 178}, 1 (1997), 1--37.

\bibitem{dacorogna_marcellini99}
{\sc Dacorogna, B., and Marcellini, P.}
\newblock {\em Implicit partial differential equations}, vol.~37 of {\em
  Progress in Nonlinear Differential Equations and their Applications}.
\newblock Birkh\"{a}user Boston, Inc., Boston, MA, 1999.

\bibitem{DeLellisSzekelyhidi09}
{\sc De~Lellis, C., and Sz\'{e}kelyhidi, Jr., L.}
\newblock The {E}uler equations as a differential inclusion.
\newblock {\em Ann. of Math. (2) 170}, 3 (2009), 1417--1436.

\bibitem{DeLellisSzekelyhidi10}
{\sc De~Lellis, C., and Sz\'{e}kelyhidi, Jr., L.}
\newblock On admissibility criteria for weak solutions of the {E}uler
  equations.
\newblock {\em Arch. Ration. Mech. Anal. 195}, 1 (2010), 225--260.

\bibitem{MR2917063}
{\sc De~Lellis, C., and Sz\'{e}kelyhidi, Jr., L.}
\newblock The {$h$}-principle and the equations of fluid dynamics.
\newblock {\em Bull. Amer. Math. Soc. (N.S.) 49}, 3 (2012), 347--375.

\bibitem{DeLellisSzekelyhidi13}
{\sc De~Lellis, C., and Sz\'{e}kelyhidi, Jr., L.}
\newblock Dissipative continuous {E}uler flows.
\newblock {\em Invent. Math. 193}, 2 (2013), 377--407.

\bibitem{delellis_szekelyhidi16}
{\sc De~Lellis, C., and Sz\'{e}kelyhidi, Jr., L.}
\newblock High dimensionality and h-principle in {PDE}.
\newblock {\em Bull. Amer. Math. Soc. (N.S.) 54}, 2 (2017), 247--282.

\bibitem{DiPernaLions89}
{\sc DiPerna, R.~J., and Lions, P.-L.}
\newblock Ordinary differential equations, transport theory and {S}obolev
  spaces.
\newblock {\em Invent. Math. 98}, 3 (1989), 511--547.

\bibitem{Ise19}
{\sc Isett, P.}
\newblock {\em H\"{o}lder continuous {E}uler flows in three dimensions with
  compact support in time}, vol.~196 of {\em Annals of Mathematics Studies}.
\newblock Princeton University Press, Princeton, NJ, 2017.

\bibitem{kirchheim03}
{\sc Kirchheim, B.}
\newblock Analysis and geometry of microstructure.
\newblock Habilitation thesis, University of Leipzig,
  https://www.mis.mpg.de/preprints/ln/lecturenote-1603.pdf, 2003.

\bibitem{Kirchheim:2002wc}
{\sc Kirchheim, B., {\v{S}}ver{\'a}k, V., and M{\"u}ller, S.}
\newblock {Studying nonlinear pde by geometry in matrix space}.
\newblock In {\em Geometric analysis and nonlinear partial differential
  equations}. Springer, Berlin, 2003, pp.~347--395.

\bibitem{LiebLoss}
{\sc Lieb, E.~H., and Loss, M.}
\newblock {\em Analysis}, vol.~14 of {\em Graduate Studies in Mathematics}.
\newblock American Mathematical Society, Providence, RI, 1997.

\bibitem{ModenaSattig2020}
{\sc Modena, S., and Sattig, G.}
\newblock Convex integration solutions to the transport equation with full
  dimensional concentration.
\newblock {\em Ann. Inst. H. Poincar\'{e} Anal. Non Lin\'{e}aire 37}, 5 (2020),
  1075--1108.

\bibitem{ModenaSzekelyhidi18}
{\sc Modena, S., and Sz\'{e}kelyhidi, Jr., L.}
\newblock Non-uniqueness for the transport equation with {S}obolev vector
  fields.
\newblock {\em Ann. PDE 4}, 2 (2018), Paper No. 18, 38.

\bibitem{MR3884855}
{\sc Modena, S., and Sz\'{e}kelyhidi, Jr., L.}
\newblock Non-uniqueness for the transport equation with {S}obolev vector
  fields.
\newblock {\em Ann. PDE 4}, 2 (2018), Paper No. 18, 38.

\bibitem{ModenaSzekelyhidi19}
{\sc Modena, S., and Sz\'{e}kelyhidi, Jr., L.}
\newblock Non-renormalized solutions to the continuity equation.
\newblock {\em Calc. Var. Partial Differential Equations 58}, 6 (2019), Paper
  No. 208, 30.

\bibitem{MR4029736}
{\sc Modena, S., and Sz\'{e}kelyhidi, Jr., L.}
\newblock Non-renormalized solutions to the continuity equation.
\newblock {\em Calc. Var. Partial Differential Equations 58}, 6 (2019), Paper
  No. 208, 30.

\bibitem{muller_sverak03}
{\sc M\"{u}ller, S., and {\v{S}}ver\'{a}k, V.}
\newblock Convex integration for {L}ipschitz mappings and counterexamples to
  regularity.
\newblock {\em Ann. of Math. (2) 157}, 3 (2003), 715--742.

\bibitem{nash54}
{\sc Nash, J.}
\newblock {$C^1$} isometric imbeddings.
\newblock {\em Ann. of Math. (2) 60\/} (1954), 383--396.

\bibitem{PitchoSorella21}
{\sc Pitcho, J., and Sorella, M.}
\newblock Almost everywhere non-uniqueness of integral curves for
  divergence-free sobolev vector fields, 2021.

\end{thebibliography}
\end{document}